\font\smallit=cmti10
\font\smalltt=cmtt10
\renewcommand\section{\@startsection {section}{1}{\z@}
{-30pt \@plus -1ex \@minus -.2ex}
{2.3ex \@plus.2ex}
{\normalfont\normalsize\bfseries\boldmath}}
\renewcommand\subsection{\@startsection{subsection}{2}{\z@}
{-3.25ex\@plus -1ex \@minus -.2ex}
{1.5ex \@plus .2ex}
{\normalfont\normalsize\bfseries\boldmath}}
\renewcommand{\@seccntformat}[1]{\csname the#1\endcsname. }
\newtheorem{theorem}{Theorem}[section]
\newtheorem*{theorem*}{Theorem}
\newtheorem{lemma}[theorem]{Lemma}
\newtheorem{corollary}[theorem]{Corollary}
\theoremstyle{definition}
\newtheorem{definition}[theorem]{Definition}
\newtheorem{remark}[theorem]{Remark}
\newtheorem{fact}[theorem]{Fact}
\begin{document}

\begin{center}
\uppercase{\bf Nilpotent polynomials over $\mathbb{Z}$}
\vskip 20pt
{\bf Sayak Sengupta}\\
{\smallit Department of Mathematics and Statistics,
		Binghamton University - SUNY,
		Binghamton, New York,
		USA}\\
{\tt sengupta@binghamton.edu}\\ 
\vskip 10pt

\end{center}
\vskip 20pt
\vskip 30pt

\centerline{\bf Abstract}
\noindent
For a polynomial $u(x)$ in $\mathbb{Z}[x]$ and $r\in\mathbb{Z}$, we consider the orbit of $u$ at $r$ denoted and defined by $\mathcal{O}_u(r):=\{u^{(n)}(r)~|~n\in\mathbb{N}\}$. Here we study polynomials for which $0$ is in the orbit, and we call such polynomials \textit{nilpotent at }$r$ of index $m$ where $m$ is the minimum element of the set $\{n\in\mathbb N~|~u^{(n)}(r)=0\}$. We provide here a complete classification of these polynomials when $|r|\le 4$, with $|r|\le 1$ already covered in the author's previous paper, titled \textit{Locally nilpotent polynomials over $\mathbb Z$}. The central goal of this paper is to study the following questions: (i) relation between the integers $r$ and $m$ when the set of nilpotent polynomials at $r$ of index $m$ is non-empty, (ii) classification of the integer polynomials with nilpotency index $|r|$ for large enough $|r|$, and (iii) bounded integer polynomial sequences $\{r_n\}_{n\ge 0}$.

\pagestyle{myheadings}
\markright{\smalltt Nilpotent polynomials (2024)\hfill}
\thispagestyle{empty}
\baselineskip=12.875pt
\vskip 30pt

\section{Introduction} We start with the following definition and notation.

\subsection{Definitions, Notation, and Terminology} 

Throughout this paper, $\mathbb{N}$ is the set of all positive integers, for an integer polynomial $u$, $u^{(0)}(x)=x$, and for each $n\in\mathbb{N}$ $$u^{(n)}(x):=\underbrace{(u\circ u\circ \cdots\circ u)}_{n\text{ times}}(x),$$ is the $n$th iteration of $u$. Unless specifically mentioned otherwise, by a "polynomial $u$" we will mean an "integer polynomial $u$".

\begin{definition}
    Given an integer $r$, we call an integer polynomial $u$ \textit{nilpotent} at $r$, if there is an $n\in\mathbb{N}$ so that $u^{(n)}(r)=0$, i.e., $0\in \mathcal{O}_u(r)$. We call the smallest of such $n$'s the \textit{index/index of nilpotency} of $u$ at $r$. By convention, the zero polynomial is nilpotent at every $r$ with index 1.

 We fix the following notation:\\
$N_{r,i}^d:=\{u~|~u\textup{ is nilpotent at }r \textup{ of index }i\textup{ and degree }d\},~N_{r,i}:=\sqcup_{d=0}^\infty N_{r,i}^d~,$ and $N_r:=\sqcup_{i=1}^\infty N_{r,i}~.$ \\
\end{definition}

In \cite{SS23}, given an integer $r$, we considered the polynomials $u$ for which $0$ is not in the orbit of $u$ at $r$, but modulo every prime $p$, there is a $m_p\in\mathbb{N}$ such that $p|u^{(m_p)}(r)$. It turns out that these polynomials can only be linear, and they have been completely classified in \cite{SS23}. In this paper we study the polynomials $u$ at $r$ where $0$ is in the orbit. We will call these polynomials \textit{nilpotent at $r$} (see definitions and notation below). We will focus on only positive integers $r$, as one can obtain the same for negative integers $r$ by using Fact 2.4 (see Section 2). In \cite{SS23}, such polynomials were classified for $r=0$ and $r=\pm 1$. In this paper, we provide classification of nilpotent polynomials for $r=2,3,$ and $4$, which can be found in Theorems 3.2, 3.4, and 3.5. The classification of such polynomials for arbitrary $r$'s is perhaps impossible. However, we do prove some partial and qualitative results, and we provide some inductive technique through which one can classify such polynomials at a given positive integer $r$ ($> 1$) when the classification of nilpotent polynomials at $1,\ldots,r-1$ are known.

We also prove that the largest $m$, say $m_{\text{max}}$, for which $N_{r,m}$ is non-empty is $$m_{\text{max}}(r)=\left\{ \begin{array}{l} 2, \ if \ r=0\\
3, \ if \ |r| =1\\
4, \ if \ |r| =2\\
|r|, \ if |r| \geq 3 \end{array} \right.$$ Also, we provide a complete classification of bounded integer sequences with a generating integer polynomial (see Definition 4.6 and Corollary 4.11).

The rest of the notation that we use in this paper are as follows. For a given integer $r\ge 2$ and an integer polynomial $u$, we define 
\begin{equation}
    C_r:=\text{max}\{s\in\mathbb{N}\cup\{0\}~|~r\ge s!-s-1\}, 
\end{equation} 

\begin{equation}
  \text{and }  u_i(r):=u^{(i+1)}(r)-u^{(i)}(r),~i\in\mathbb{N}\cup\{0\}.
\end{equation}

Using the fact that for any two distinct integers $a$ and $b$, $a-b$ divides $u(a)-u(b)$ one obtains that $u_i(r)|u_j(r)$ for every $i<j$. The usefulness of $C_r$ will be apparent in Lemma 2.1, and it plays a crucial role in the techniques of most of the proofs in this paper.

The following are some examples of nilpotent, and non-nilpotent polynomials.

\subsection{Some Examples}
\begin{itemize}
    \item Let $r\in\mathbb{Z}$. For each $q(x)\in\mathbb{Z}[x]$, $(x-r)q(x)\in N_{r,1}$.

    \item Let $r\in\mathbb Z$. For each positive integer $m$ dividing $r$, the polynomial $u_m(x)=x-r/m$ is nilpotent of index $m$.

    \item Let $u(x)=-x^3+9x^2-25x+25$. Then $u(2)=3,u(3)=4,u(4)=5, \text{ and }u(5)=0$, i.e., $u\in N_{2,4}^3$.

    \item Let $u(x)=x^3-6x^2+12x-7$. Then $u(3)=2,u(2)=1,\text{ and }u(1)=0$, i.e., $u\in N_{3,3}$.

    \item Let $u(x)=x^2-4x$. Then $u(3)=-3$, and $u(-3)=21$. As $u(x)-x>0$  on $(5,\infty)$, it follows that $0\not\in \mathcal{O}_u(3)$, i.e., $u\not\in N_3$.
\end{itemize}

The next three facts follow directly from Theorem 4.1, Corollary 4.2, and Theorem 4.4, respectively, of \cite{SS23}. These facts will be used extensively throughout the paper, and so they are reproduced here for the reader's convenience.

\begin{fact}[cf. \cite{SS23}, Theorem 4.1]
    The following is the list of all polynomials in $N_1$:
\begin{enumerate}
    \item[(a)] $ (x-1)p(x)$ with $p(x)\in\mathbb{Z}[x]$ (Nilpotent of nilpotency index 1),
    
    \item[(b)] $ -2x+4+p(x)(x-1)(x-2)$ with $p(x)\in\mathbb{Z}[x]$ (Nilpotent of nilpotency index 2), and
    
    \item[(c)] $ -2x^2+7x-3+p(x)(x-1)(x-2)(x-3)$ with $p(x)\in\mathbb{Z}[x]$ (Nilpotent of nilpotency index 3).

    \end{enumerate}
\end{fact}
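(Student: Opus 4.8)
The plan is to treat the case $r=1$ directly through the orbit sequence $a_i:=u^{(i)}(1)$ and to show that its shape is completely forced; the three families (a)--(c) then drop out by interpolation. For a $u$ nilpotent at $1$ of index $m$, write $d_i:=u_i(1)=a_{i+1}-a_i$, so that $a_0=1,\,a_1,\dots,a_{m-1},\,a_m=0$. First I would record that these values are pairwise distinct (a repeat $a_i=a_j$ with $i<j$ would propagate to $a_{m-(j-i)}=0$, contradicting minimality of $m$), and that the divisibility recalled after (1.2) gives the chain $d_0\mid d_1\mid\cdots\mid d_{m-1}$, hence $|d_0|\le|d_1|\le\cdots\le|d_{m-1}|$. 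Telescoping, $d_j$ divides $\sum_{i\ge j}d_i=a_m-a_j=-a_j$, so each difference divides the point it leaves from; in particular $d_0\mid a_0=1$, forcing $d_0=\pm1$. Thus $a_1\in\{0,2\}$: the value $0$ gives index $1$ (equivalently $u(1)=0$), while for index $\ge 2$ we must have $a_1=2$.

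The heart of the argument is the bound $m\le 3$, obtained by running the divisibility from the zero end of the orbit. Assume $m\ge 3$. Since $a_0-a_{m-1}$ divides $u(a_0)-u(a_{m-1})=a_1-a_m=2$, we get $(1-a_{m-1})\mid 2$, so $a_{m-1}\in\{-1,3\}$ after discarding $0$ and $2=a_1$ (legitimate as $m-1\ne 1$). If $a_{m-1}=-1$ then $d_{m-1}=a_m-a_{m-1}=1$, and since $|d_{m-1}|$ is the maximum of the $|d_i|$, every step is $\pm1$; starting from $a_0=1,\,a_1=2$ the orbit is then forced strictly upward and never returns to $0$, a contradiction. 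Hence $a_{m-1}=3$. Feeding this back, $a_0-a_{m-2}$ divides $a_1-a_{m-1}=2-3=-1$, so $(1-a_{m-2})\mid 1$ and $a_{m-2}=2$; but $a_1=2$ as well, so distinctness forces $m-2=1$, i.e. $m=3$. Therefore the only admissible orbits are $1\to 0$, $\;1\to 2\to 0$, and $1\to 2\to 3\to 0$.

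It remains to convert each forced orbit into a polynomial family. For index $1$ the condition $u(1)=0$ is equivalent to $(x-1)\mid u$, giving (a). For indices $2$ and $3$ the prescribed integer values at the distinct points $\{1,2\}$, resp. $\{1,2,3\}$, determine $u$ up to a multiple of $(x-1)(x-2)$, resp. $(x-1)(x-2)(x-3)$: if two integer polynomials agree at these points, their difference lies in $\mathbb{Z}[x]$ and vanishes there, hence (dividing successively by the monic factors $x-1,x-2,\dots$, each quotient staying in $\mathbb{Z}[x]$) is divisible by the corresponding product. Exhibiting one particular solution in each case, namely $-2x+4$ for $(u(1),u(2))=(2,0)$ and $-2x^2+7x-3$ for $(u(1),u(2),u(3))=(2,3,0)$, produces exactly (b) and (c); the intermediate prescribed values being nonzero guarantees the index is exactly $2$, resp. $3$, so the three families are disjoint and exhaust $N_1$.

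The main obstacle is the bound $m\le 3$. The consecutive divisibilities $d_i\mid d_{i+1}$ by themselves do not terminate the orbit, since they allow geometrically growing chains such as $1,2,4,8,\dots$; the decisive input is the pair of long-range constraints coming from $a_0-a_{m-1}$ and $a_0-a_{m-2}$, which pin the last two orbit values to $3$ and $2$ and collide with $a_1=2$. I expect the one delicate step to be the exclusion of $a_{m-1}=-1$, where the monotonicity $|d_0|\le\cdots\le|d_{m-1}|$ must be paired with the already-known value $a_1=2$ to rule out a unit-step orbit that escapes to $+\infty$ and never reaches $0$.
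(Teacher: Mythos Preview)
Your argument is correct. Note, however, that in this paper the statement is not proved at all: it is quoted as a fact from \cite{SS23}, so there is no in-paper proof to compare against. What one can compare is your method with the template the paper uses for the analogous classifications at $r=2,3,4$ (Theorems~3.2, 3.4, 3.5). That template fixes the number $k$ of initial ``$+1$'' steps via Lemma~2.1, then reduces to a smaller starting point through the scaling $v(x)=\tfrac{1}{a}u(ax)$ and appeals to the already-known lists. Your route is different: you never scale or recurse, but instead combine the chain $d_0\mid d_1\mid\cdots\mid d_{m-1}$ with the long-range divisibilities $(a_0-a_{m-1})\mid(a_1-a_m)$ and $(a_0-a_{m-2})\mid(a_1-a_{m-1})$ to pin the tail of the orbit to $\ldots,2,3,0$, forcing $m\le 3$ directly. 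This is more self-contained and elementary for $r=1$; the paper's reduction machinery, by contrast, is what makes the inductive structure work uniformly for larger $r$ (indeed Lemma~2.1 explicitly excludes $r=1$). Your exclusion of $a_{m-1}=-1$ via the monotonicity $|d_0|\le\cdots\le|d_{m-1}|$ together with $a_1=2$ is the only nonroutine step, and it is handled correctly.
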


\begin{fact}[cf. \cite{SS23}, Corollary 4.2]
    The following is the list of all polynomials in $N_{-1}$:
\begin{enumerate}
    \item[(a)] $ (x+1)p(x)$ with $p(x)\in\mathbb{Z}[x]$ (Nilpotent of nilpotency index 1),
    
    \item[(b)] $ -2x-4+p(x)(x+1)(x+2)$ with $p(x)\in\mathbb{Z}[x]$ (Nilpotent of nilpotency index 2), and
    
    \item[(c)] $ 2x^2+7x+3+p(x)(x+1)(x+2)(x+3)$ with $p(x)\in\mathbb{Z}[x]$ (Nilpotent of nilpotency index 3).
\end{enumerate}
\end{fact}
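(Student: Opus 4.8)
The plan is to deduce the $N_{-1}$ classification from the already-stated $N_1$ classification (Fact 2.1) by exhibiting an explicit, index-preserving bijection between $N_1$ and $N_{-1}$. The key observation is that iteration commutes with conjugation by the sign map. Concretely, for $u\in\mathbb{Z}[x]$ set $v(x):=-u(-x)$. Writing $\sigma(x)=-x$, one has $v=\sigma\circ u\circ\sigma$, and since $\sigma\circ\sigma=\mathrm{id}$ this yields $v^{(n)}=\sigma\circ u^{(n)}\circ\sigma$ for every $n$, whence $v^{(n)}(-1)=-\,u^{(n)}(1)$. Therefore $v^{(n)}(-1)=0$ if and only if $u^{(n)}(1)=0$, and taking the least such $n$ shows that $u$ is nilpotent at $1$ of index $m$ exactly when $v=-u(-x)$ is nilpotent at $-1$ of index $m$.

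First I would record that $u\mapsto -u(-x)$ is an involution of $\mathbb{Z}[x]$, hence a bijection, so it restricts to an index-preserving bijection $N_1\to N_{-1}$. Consequently $N_{-1}$ is obtained by applying this map to each of the three families in Fact 2.1, and it remains only to compute the images. For (a), $u(x)=(x-1)p(x)$ maps to $(x+1)p(-x)$; since $p\mapsto p(-x)$ is itself a bijection of $\mathbb{Z}[x]$, the image is exactly the family $(x+1)\tilde p(x)$. For (b), $-2x+4+p(x)(x-1)(x-2)$ maps to $-2x-4-p(-x)(x+1)(x+2)$, i.e. the family $-2x-4+\tilde p(x)(x+1)(x+2)$ after the bijective substitution $\tilde p(x)=-p(-x)$. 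For (c), using $(-x-1)(-x-2)(-x-3)=-(x+1)(x+2)(x+3)$, the polynomial $-2x^2+7x-3+p(x)(x-1)(x-2)(x-3)$ maps to $2x^2+7x+3+p(-x)(x+1)(x+2)(x+3)$, giving the stated third family.

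The only real bookkeeping — and the step most prone to sign errors — is the constant/coefficient tracking in families (b) and (c), where the outer negation and the inner substitution $x\mapsto -x$ interact: one must check that each product of linear factors picks up the correct overall sign and that the affine and quadratic ``seeds'' transform as $-2x+4\mapsto -2x-4$ and $-2x^2+7x-3\mapsto 2x^2+7x+3$. Since the free parameter polynomial is absorbed into a fresh bijective parameter in each case, no polynomial nilpotent at $-1$ is missed and none is counted twice; hence the three families are exhaustive with the indices matching, which is precisely Fact 2.2.
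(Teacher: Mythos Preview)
Your proof is correct and matches the paper's approach: the paper does not give a standalone argument here but records the statement as a corollary (in \cite{SS23}) of the $N_1$ classification, and the sign-conjugation $u\mapsto -u(-x)$ you use is precisely Fact~2.4. Your coefficient tracking in each of the three families is accurate.
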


\begin{fact}[cf. \cite{SS23}, Theorem 4.4]
    The following is the list of all polynomials in $N_0$:
\begin{enumerate}
    \item[(a)] $xp(x)$ with $p(x)\in\mathbb{Z}[x]\setminus\{0\}$ (Nilpotent of nilpotency index 1), and

    \item[(b)] $(x-a)(xp(x)-1)$, $a\in\mathbb{Z}\setminus\{0\}$ and $p(x)\in \mathbb{Z}[x]$ (Nilpotent of nilpotency index 2).
    
\end{enumerate}
\end{fact}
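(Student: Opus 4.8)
The plan is to split the proof according to the index of nilpotency at $0$: first pin down the shape of the index-$1$ and index-$2$ polynomials using the factor theorem, and then show that no polynomial nilpotent at $0$ can have index $\geq 3$. This last part is the only real obstacle; the two form statements are routine.

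First I would dispose of the two forms. A polynomial $u$ lies in $N_{0,1}$ exactly when $u(0)=0$, i.e.\ when its constant term vanishes, which is equivalent to $u(x)=xp(x)$ for some $p\in\mathbb{Z}[x]$; requiring $u\neq 0$ gives $p\neq 0$, so this is (a). For index $2$, set $a:=u(0)$, where $a\neq 0$ since the index exceeds $1$; the condition $u^{(2)}(0)=0$ says $u(a)=0$, so the factor theorem gives $u(x)=(x-a)q(x)$ with $q\in\mathbb{Z}[x]$. Evaluating at $0$ yields $-a\,q(0)=a$, hence $q(0)=-1$, i.e.\ $q(x)=xp(x)-1$ for some $p\in\mathbb{Z}[x]$, which is (b). Both computations reverse, so the forms are exactly right and each produces the asserted index.

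The heart of the matter is to rule out index $m\geq 3$. Suppose $u$ is nilpotent at $0$ with minimal index $m$, and write $a_i:=u^{(i)}(0)$, so $a_0=a_m=0$ and $a_1,\dots,a_{m-1}\neq 0$. A short argument from minimality shows $a_0,\dots,a_{m-1}$ are pairwise distinct: any repetition $a_i=a_j$ with $i<j<m$ would make the orbit eventually periodic and force $0$ to reappear before step $m$. Setting $d_i:=u_i(0)=a_{i+1}-a_i$, the divisibility chain recorded after $(1.2)$ gives $d_0\mid d_1\mid\cdots\mid d_{m-1}$; moreover $d_m=u^{(m+1)}(0)-u^{(m)}(0)=u(0)=a_1=d_0$, so that same chain applied to $i=m-1<m$ yields $d_{m-1}\mid d_0$. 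Thus $d_0\mid d_1\mid\cdots\mid d_{m-1}\mid d_0$ is a cycle of divisibilities among nonzero integers, forcing $|d_0|=\cdots=|d_{m-1}|=:\delta>0$.

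It follows that each $a_i$ equals $k_i\delta$ for integers $k_i$, and $k_0=0,k_1,\dots,k_{m-1}$ (distinct), $k_m=0$ is a walk on $\mathbb{Z}$ with steps $\pm 1$ returning to the origin. Since the interior values are nonzero the walk never crosses $0$, so it stays strictly on one side, say positive. Let $s$ be the first time it attains its maximum $M$; then $k_{s-1}=M-1$ and $k_{s+1}=M-1$. If $M\geq 2$ these are two distinct interior entries with equal value, which is impossible; hence $M=1$, and distinctness of the (positive) interior values then forces a single interior step, i.e.\ $m=2$. Therefore $N_0=N_{0,1}\sqcup N_{0,2}$, which is the claimed list. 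I expect the cyclic divisibility step — upgrading the one-sided chain $d_0\mid\cdots\mid d_{m-1}$ to equality of magnitudes via $d_{m-1}\mid d_0$ — to be the crux; once the steps have constant size the self-avoiding-walk conclusion is elementary, and distinctness of the orbit points is the only other point needing care.
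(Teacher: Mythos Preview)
The paper does not actually prove this statement: Fact~1.4 is quoted without proof from \cite{SS23}, Theorem~4.4, so there is no in-paper argument to compare against. That said, your proposal is correct and entirely self-contained modulo the divisibility $u_i(r)\mid u_j(r)$ recorded after~(1.2). The derivation of the forms in (a) and (b) is standard factor-theorem bookkeeping; the substantive part is ruling out index $m\ge 3$, and your cyclic-divisibility trick (using $d_m=a_1-0=d_0$ to close the chain $d_0\mid d_1\mid\cdots\mid d_{m-1}\mid d_0$ and force $|d_i|$ constant) together with the $\pm 1$-step walk argument is clean and correct. The only places worth a sentence of extra justification in a final write-up are (i) that the interior orbit points $a_0,\dots,a_{m-1}$ are distinct, which you handle by noting a repetition $a_i=a_j$ propagates to $a_{i+(m-j)}=a_m=0$ with $1\le i+(m-j)<m$, and (ii) that a $\pm1$ walk from $0$ back to $0$ with nonzero interior must stay on one side of $0$, since a sign change would require passing through $0$. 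Both are exactly as you indicated.
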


Our main results are the following three.

\begin{theorem}[cf. Theorems 3.2 and 3.4, and Corollaries 3.5 and 4.2]
    For any integer $r$, and $u$ nilpotent at $r$, the nilpotency index of $u$ at $r$ is at most $|r|+2$. Moreover, if $|r|\ge 3$, then the nilpotency index of $u$ is at most $|r|$.
\end{theorem}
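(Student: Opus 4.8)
The plan is to reduce to $r \ge 2$ and then to bound the length of the orbit $a_0 = r,\ a_1 = u(r),\ \ldots,\ a_m = u^{(m)}(r) = 0$ from the arithmetic of its consecutive differences $u_i(r) = a_{i+1} - a_i$. First I would invoke the symmetry $u(x) \mapsto -u(-x)$ of Fact 2.4, under which $-r$ inherits the nilpotency behaviour of $r$ with the same index, to restrict to $r \ge 0$; the cases $r = 0$ and $|r| = 1$ are then immediate from Facts 1.1--1.3 (indices at most $2$ and $3 = |r|+2$), and $r = 2$ from the explicit list in Theorem 3.2 (index at most $4 = r+2$). For $r \ge 3$ the goal sharpens to $m \le r$. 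The standing facts I would exploit are that $a_0, \ldots, a_m$ are $m+1$ \emph{distinct} integers---otherwise the sequence is eventually periodic and cannot first vanish at step $m$---that every $u_i(r)$ is nonzero, and above all the divisibility chain $u_0(r) \mid u_1(r) \mid \cdots \mid u_{m-1}(r)$ together with its refinement $(a_i - a_j) \mid (a_{i+1} - a_{j+1})$, both flowing from $a - b \mid u(a) - u(b)$.

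Since the chain forces $|u_0(r)| \le |u_1(r)| \le \cdots$, the orbit opens with a (possibly empty) run of unit steps---necessarily of one fixed sign, since a $+1$ adjacent to a $-1$ repeats a value---after which every step has magnitude at least $2$. I would therefore split the orbit into this monotone unit run and a large-step tail. A \emph{decreasing} unit run reads $r, r-1, r-2, \ldots$ and either reaches $0$ in exactly $r$ steps or halts at a positive value; an \emph{increasing} run reads $r, r+1, \ldots, r+t$, and it is here that the refined divisibility does the decisive work.

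The heart of the argument is that an increasing run on the consecutive nodes $r, r+1, \ldots, r+t$ is severely constrained: for any integer polynomial the $t$-th finite difference of the values it attains at consecutive integer nodes is divisible by $t!$, so the value leaving the run differs from the linear continuation $r+t+1$ by a multiple of $t!$. In the extremal single-jump family $r, r+1, \ldots, r+t, 0$ this reads $t! \mid (r + t + 1)$, whence $t! \le r + t + 1$, i.e. $t \le C_r$ from (1.1); this is precisely the function of $C_r$, to cap the length of every increasing excursion. A short check then shows the resulting index $t+1$ can exceed $r$ only when $r \in \{1, 2\}$, where it equals $r + 2$ and is realised by $1,2,3,0$ and $2,3,4,5,0$, while for $r \ge 3$ the factorial obstruction keeps any increasing excursion no longer than the plain descent.

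Finally I would assemble the bound. An orbit confined to $\{0, 1, \ldots, r\}$ has at most $r+1$ distinct entries, hence $m \le r$; so it remains to control excursions that leave this range---upward, capped by $C_r$ as above, or below $0$, where the first negative step already has magnitude exceeding $r$ and forces all later steps to be at least that large, permitting only boundedly many before $0$ is overshot. The main obstacle I anticipate is not either extremal family but the bookkeeping for \emph{mixed} orbits that interleave small descents, increasing excursions, and large jumps: one must verify that no such combination outperforms both the pure descent (bound $r$) and the pure increasing excursion (bound $r+2$, attained only for $r \le 2$). Handling this interaction uniformly---rather than the clean extremal cases alone---is the delicate point, and it is exactly what the detailed casework behind Theorems 3.2, 3.4 and 3.5 is designed to carry out.
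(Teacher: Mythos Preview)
Your outline handles the small cases correctly and you have understood the role of Lemma~2.1 and the constant $C_r$ in capping an initial increasing run. The gap is in the passage to general $r$. You close by saying that the ``bookkeeping for mixed orbits'' is ``exactly what the detailed casework behind Theorems~3.2, 3.4 and~3.5 is designed to carry out,'' but those theorems only classify $N_2$, $N_3$, $N_4$; they say nothing about $r\ge 5$. So as written, your proposal simply has no argument once $r\ge 5$, and the pieces you do sketch (orbits confined to $\{0,\ldots,r\}$, a single increasing excursion followed by one jump to $0$, an excursion below $0$) do not cover the general orbit. The claim about negative excursions is also not right: the first step that lands below $0$ need not have magnitude exceeding $r$ (e.g.\ a step from $1$ to $-1$), so the ``all later steps are at least that large'' mechanism does not immediately bound the tail.

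What the paper actually does for $r\ge 4$ (Theorem~4.1) is an \emph{induction on $r$} powered by a rescaling that your proposal is missing entirely. After the initial increasing run of length $k\le C_{r}$, one sets $a:=|u_k(r)|$ and considers the polynomial $v(x):=\tfrac{1}{a}\,u(ax)$. Since $a\mid u^{(k)}(r)=r+k$ and $a\mid u_i(r)$ for all $i\ge k$, the map $v$ has integer coefficients and lies in $N_{b,\,m-k}$ with $b=(r+k)/a<r$. The induction hypothesis (together with the explicit classifications for $b\le 4$) then gives $m-k\le \max(b,\,b+2)$, and a short inequality using $a\ge k+1$ closes the bound $m\le r$. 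This reduction-by-rescaling is the engine of the proof; Theorems~3.2--3.5 serve only as base cases for the induction, not as a template one could run at every $r$.
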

    
\begin{theorem}[cf. Theorem 4.3 and Corollary 4.4]
   For any integer $r$ with $|r|\ge 5$, and $u$ nilpotent at $r$ of nilpotency index $r$, $u(x)$ must be of the form $$(x-\varepsilon )+(x-\varepsilon)\cdots(x-\varepsilon r)p(x)$$ for some polynomial $p(x)$, where $\varepsilon=sgn(r)$.
\end{theorem}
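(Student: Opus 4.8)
The plan is to reduce to $r\ge 5$ (so $\varepsilon=1$) using the sign symmetry of Fact~2.4, which puts $N_{r,i}$ and $N_{-r,i}$ in bijection via $x\mapsto -x$; the case $r\le -5$ then follows as the stated Corollary. Fix $r\ge 5$ and set $a_i:=u^{(i)}(r)$, so that $a_0=r$, $a_r=0$, and $a_0,\dots,a_{r-1}\ne 0$ because the index is exactly $r$. I would first note that the $a_i$ are pairwise distinct: a coincidence $a_i=a_j$ with $i<j<r$ would make the orbit eventually periodic along a cycle avoiding $0$, which is impossible since $a_r=0$. Thus $a_0,\dots,a_r$ are $r+1$ distinct integers, and the whole theorem reduces to the claim that they form the descending staircase $a_i=r-i$. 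Indeed, once $u(j)=j-1$ for every $j\in\{1,\dots,r\}$, the polynomial $u(x)-(x-1)$ vanishes at $1,2,\dots,r$, so $\prod_{j=1}^{r}(x-j)$ divides it and $u$ has exactly the asserted shape; this interpolation step is routine, and all the difficulty lies in proving the staircase.

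For that I would use the consecutive differences $d_i:=u_i(r)=a_{i+1}-a_i$, which by the divisibility observation of Section~1 form a chain $d_0\mid d_1\mid\cdots\mid d_{r-1}$ of nonzero integers with $\sum_{i=0}^{r-1}d_i=-r$. Two facts are immediate. Since $d_0$ divides every $d_i$, each partial sum is a multiple of $d_0$, so $a_k\equiv a_0=r\pmod{d_0}$ for all $k$, and $a_r=0$ gives $|d_0|\mid r$; more generally $a_k\equiv a_j\pmod{d_j}$ for every $k\ge j$. Secondly, $|d_0|\le|d_1|\le\cdots$ and each $|d_i|\ge1$, so $\sum|d_i|\ge r$ with equality only when every $d_i=-1$. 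Hence if all $d_i$ were negative we would have $\sum d_i=-\sum|d_i|\le-r$, forcing $d_i=-1$ throughout; in that case $a_0,\dots,a_r$ is a strictly decreasing run of $r+1$ distinct integers from $r$ to $0$, which by pigeonhole is exactly $r,r-1,\dots,0$. So it suffices to show that no difference can have $|d_i|\ge 2$: otherwise let $j$ be the least index with $|d_j|\ge 2$. For $i<j$ we have $d_i=\pm1$, and a $\pm1$ walk through distinct integers is strictly monotone, so the prefix is either $a_i=r-i$ (descending) or $a_i=r+i$ (ascending).

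In the descending case $a_j=r-j>0$, and the tail $a_j,a_{j+1},\dots,a_r$ is itself an orbit of $u$ that is nilpotent at $a_j=r-j$ of index exactly $r-j$, i.e.\ of \emph{maximal} index by Theorem~3.1. I would run a strong induction on $r$: if $r-j\ge 5$ the induction hypothesis forces this tail to be the staircase $r-j,r-j-1,\dots,0$, whence $d_j=-1$, contradicting $|d_j|\ge 2$; the finitely many small base values $r-j\le 4$ are pinned down by the explicit classifications in Theorems~3.2, 3.4 and 3.5, using crucially that the tail must avoid the already-visited block $\{r-j+1,\dots,r\}$, which for $r\ge 5$ is large enough to exclude every non-staircase maximal orbit (this is exactly the mechanism that fails at $r=2,3$).

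The remaining possibilities, namely $j=0$ with $|d_0|\ge 2$ and the ascending prefix $a_i=r+i$, are the genuine obstacle, because there the tail starts at a value of absolute value $\ge r$ and its index $r-j$ is no longer maximal, so induction gives nothing. Here I would exploit the residue structure directly: the tail $a_j,\dots,a_r$ consists of $r-j+1$ distinct integers lying in a single class modulo $|d_j|\ge 2$ (with $|d_j|\mid(r+j)$ in the ascending case), hence spread over a range at least $(r-j)\,|d_j|\ge 2(r-j)$, while still being forced back to $0$ through a divisibility chain of differences. Quantifying that such a wide, coarse orbit cannot reconcile its step budget with returning to $0$ is precisely the content of Lemma~2.1, whose threshold is measured by $C_r$; I expect this estimate to be the main technical work, and it is exactly the place where the factorial bound defining $C_r$ and the hypothesis $r\ge 5$ are needed. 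Assembling the two cases shows every $d_i=-1$, giving $a_i=r-i$ and hence $u(j)=j-1$ for $j=1,\dots,r$; the interpolation of the first paragraph then yields $u(x)=(x-\varepsilon)+(x-\varepsilon)(x-2\varepsilon)\cdots(x-\varepsilon r)\,p(x)$, and Fact~2.4 transports the result to $r\le -5$.
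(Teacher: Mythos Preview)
Your descending-prefix induction is sound and is essentially the same step the paper performs: once $d_0=-1$ you land in $N_{r-1,r-1}$ and strong induction (with the small base values handled by Theorems~3.2, 3.4, 3.5 and the distinctness of the $a_i$) forces the full staircase. The gap is exactly where you flag it yourself: the cases $j=0$ with $|d_0|\ge 2$ and the ascending prefix $a_i=r+i$ are not proved, and your appeal to Lemma~2.1 does not close them. Lemma~2.1 only bounds the \emph{length} of an ascending run by $C_r$; it says nothing when $j=0$ and $d_0\ne 1$, and in the ascending case the resulting inequality $j\le C_r$ together with Corollary~4.2 applied at $a_j=r+j$ only gives $r-j\le r+j$, which is vacuous. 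The ``spread over a range at least $2(r-j)$'' heuristic does not by itself contradict returning to $0$ through a divisibility chain.

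The missing idea, which the paper supplies, is a \emph{scaling reduction}: with $a=|d_j|$ (or $|d_0|$ when $j=0$) one checks $a\mid u(0)$ and hence $v(x):=\tfrac{1}{a}u(ax)\in\mathbb{Z}[x]$, and $v\in N_{b,\,r-j}$ with $b=a_j/a<r$. Now the index bound of Corollary~4.2 (max index $\le b$ for $b\ge 3$, $\le b+2$ for $b\le 2$) applied to $v$ yields an inequality of the form $r-j\le b=(r+j)/a$ in the ascending case and $r\le b=r/a$ when $j=0$, which for $a\ge 2$ and $r\ge 5$ collapses to a contradiction after a short computation (this is the content of Case~2 and the final paragraph of the paper's proof). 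Without this reduction you have no leverage in those two sub-cases, so as written the argument is incomplete.
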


The final main result is the classification of all recurringly nilpotent sequences, and the classification of all bounded integer polynomial sequences.

\begin{definition}
    Let $\{r_n\}_{n\ge 0}$ be an integer sequence. We say that $\{r_n\}_{n\ge 0}$ is a polynomial sequence if it has a \textit{generating integer polynomial} $u$, i.e., there is a polynomial $u(x)\in\mathbb{Z}[x]$ such that $u^{(n)}(r_0)=r_n$ for each $n\in \mathbb{N}$. In this case we also say that $u$ satisfies $\{r_n\}_{n\ge 0}$. Moreover, if $r_n=0$ for some $n$ (respectively, for infinitely many $n$'s) then we say $\{r_n\}_{n\ge 0}$ is a nilpotent (respectively, recurringly nilpotent) sequence. It should be noted that $\{r_n\}_{n\ge 0}$ is a recurringly nilpotent polynomial sequence is equivalent to saying that a generating integer polynomial $u$ of $\{r_n\}_{n\ge 0}$ has $0$ as a periodic point.
\end{definition}

To prove Theorems 1.5 and 1.6 we use induction on $r$, and $r=4$ serves as the base case for the induction. Thus it was necessary to include the classifications of nilpotent polynomials for $r=3\text{ and }4$, $r\in\{0,\pm 1\}$ being already available (from Facts 1.2, 1.3, and 1.4). For the proofs of 
Theorems 3.2, 3.4, and 3.5, in addition to using Facts 1.2, 1.3, and 1.4, we have also used Lemma 2.1, which is stated and proved in Section 2. 

This paper consists of 4 sections. Sections 2 and 3 are dedicated to developing the background for stating and proving the main results. Section 4 consists of the main results, and it is divided into two subsections; in Section 4.1 we study the relation between $r$ and $m$ for a polynomial $u$ in $N_{r,m}$, and in Section 4.2 we study integer sequences having generating integer polynomials (see Definition 4.6). This work was inspired by the previous work of the author in \cite{SS23}, where all the definitions and notation were first announced, and that in turn was inspired by the work of A. Borisov in \cite{B13}.

Interested reader should look at the works of W. Narkiewicz and R. Marszalek in \cite{MN06}, and W. Narkiewicz in \cite{MN06} and \cite{N02}. Their work involves the study of polynomial cycles in some special algebraic number fields, which differs from the presentation here. Here we are specifically interested in studying finite nilpotent integer polynomial sequences and recurringly nilpotent polynomial sequences. Section 4.2 is highly motivated and influenced by their work.

\section{The Main Tools}

We begin this section with a lemma which, albeit simple, has a deep impact on the techniques of most of the proofs of this paper. 

\begin{lemma}
    Let $r,m$ be positive integers, $r\neq 1$ and $u\in N_{r,m}$ such that 
    \begin{equation}
        u(r)=r+1,\ldots,u(r+k-1)=r+k, \text{ however, } u(r+k)\neq r+k+1,
    \end{equation}
    for some non-negative integer $k$. Then $k\le C_r$, where $C_r$ is as in (1.1).
\end{lemma}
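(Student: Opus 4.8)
The plan is to isolate a single integer, the jump
$u_k := u^{(k+1)}(r)-u^{(k)}(r)=u(r+k)-(r+k)$, and to squeeze it between a lower and an upper bound whose combination forces $k\le C_r$. The hypothesis says precisely that the orbit begins $r,r+1,\dots,r+k$, i.e. $u^{(i)}(r)=r+i$ for $0\le i\le k$. I would first dispose of $k\in\{0,1\}$, where the claim is immediate because $C_r\ge 3$ for every $r\ge 2$ (take $s=3$ in the definition of $C_r$). So assume $k\ge 2$.

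First I would exploit the rigidity of integer polynomials along the initial run. The polynomial $v(x):=u(x)-(x+1)$ vanishes at the $k$ distinct integers $r,r+1,\dots,r+k-1$. Since $P(x):=\prod_{j=0}^{k-1}(x-r-j)$ is monic in $\mathbb{Z}[x]$, dividing $v$ by $P$ keeps the quotient in $\mathbb{Z}[x]$ and leaves a remainder of degree $<k$ vanishing at those $k$ points, hence identically zero; thus $v(x)=P(x)\,w(x)$ with $w\in\mathbb{Z}[x]$. Evaluating at $x=r+k$ and using $P(r+k)=\prod_{j=0}^{k-1}(k-j)=k!$ gives $u(r+k)=(r+k+1)+k!\,w(r+k)$, so $u_k=1+k!\,w(r+k)$. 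Because $u(r+k)\ne r+k+1$, we have $w(r+k)\ne 0$, so $u_k-1$ is a nonzero multiple of $k!$ and therefore $|u_k|\ge k!-1$.

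Next I would use nilpotency and the divisibility chain to bound $u_k$ from above. Telescoping yields $\sum_{i=0}^{m-1}u_i=u^{(m)}(r)-r=-r$, and since $u_0=\cdots=u_{k-1}=1$ this leaves $\sum_{i=k}^{m-1}u_i=-(r+k)$; the range is nonempty since $u^{(k)}(r)=r+k\ne 0$ forces $m\ge k+1$. The stated fact that $u_i\mid u_j$ for $i<j$ makes every $u_i$ with $i\ge k$ a multiple of $u_k$, so $u_k$ divides the sum $-(r+k)$. As $u_k\ne 0$ (being $1$ plus a nonzero multiple of $k!\ge 2$) and $r+k>0$, this gives $|u_k|\le r+k$. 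Combining, $k!-1\le|u_k|\le r+k$, i.e. $r\ge k!-k-1$, so $k$ belongs to the set defining $C_r$ and hence $k\le C_r$.

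The step I expect to be the main obstacle is choosing the right invariant to bound two ways. The naive approach of comparing the orbit's preimage of $0$ with the points $r,\dots,r+k-1$ runs into degenerate cases (for instance when that preimage equals $-1$, the governing divisibility $\,(r+j-b)\mid(b+1)\,$ becomes vacuous, and one is pushed into an awkward descent through the negatives). Focusing instead on the single jump $u_k$ sidesteps all of this: it is large because an integer polynomial cannot deviate from the line $y=x+1$ by less than $k!$ after matching it at $k$ consecutive integers, yet small because the divisibility chain forces it to divide the entire remaining descent $-(r+k)$. It is this two-sided pinch that makes the factorial in $C_r$ appear without any case analysis.
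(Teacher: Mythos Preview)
Your proof is correct and follows essentially the same approach as the paper: write $u(x)=(x+1)+(x-r)\cdots(x-r-k+1)g(x)$ to get $|u_k|\ge k!-1$, then use the divisibility chain $u_i\mid u_j$ on the telescoping sum $\sum_{i\ge k}u_i=-(r+k)$ to get $|u_k|\le r+k$, and combine. Your separate disposal of $k\in\{0,1\}$ via $C_r\ge 3$ is a harmless extra step (the main argument in the paper covers all $k\ge 0$ uniformly), and your added remarks that $m\ge k+1$ and $u_k\ne 0$ make explicit what the paper leaves implicit.
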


\begin{proof}
    One notices that (2.3) implies that the polynomial $u(x)-x-1$ has zeros at $r,\ldots r+k-1$, and so $u(x)$ must be of the form $$u(x)=(x+1)+(x-r)\ldots(x-r-k+1)g(x),$$ for some integer polynomial $g(x)$. Since $u(r+k)\neq r+k+1$, $g(r+k)$ must be non-zero.  Then $$u(r+k)=(r+k+1)+k!g(r+k)$$ and $g$ is a non-zero polynomial. 
    \begin{align*}
    \text{Now }  |u_k(r)|=|u^{(k+1)}(r)-u^{(k)}(r)|&=|u(r+k)-(r+k)|\\
     &=|(r+k+1)-(r+k)+k!g(r+k)|\\
     &=|1+k!g(r+k)|\\
     &\ge k!-1, \text{ where }u_i(r) \text{ is as in }(1.2).
   \end{align*}
   As $u_i(r)|u_j(r)$, whenever $i\le j$, one has $$-(r+k)=\sum\limits_{i=k}^{m-1} u_i(r)\equiv 0 \pmod {u_k(r)}.$$ This means $u_k(r)|r+k$, so that $r+k\ge k!-1$, i.e., $k\le C_r$.\end{proof}

The next lemma is an interesting consequence of Lagrange's Interpolation Theorem. The origin of this lemma goes back to at least a discussion on page 93 of \cite{MN06}.

\begin{lemma}
    Let $n\in \mathbb{N}$, and $r_0,\ldots,r_n$ be integers. Also, let $p(x)\in\mathbb{Q}[x]$ be such that $\deg(p)\le n-1$, and $p(r_i)=r_{i+1}$ for each $i\in\{0,\ldots,n-1\}$. If there is a polynomial $q(x)$ over $\mathbb{Z}$ such that $q(r_i)=p(r_i)=r_{i+1}$ for each $i\in\{0,\ldots,n-1\}$, then $p(x)\in \mathbb{Z}[x]$. Phrased differently, this means that if the interpolation polynomial for the set of points $\{(r_0,r_1),\ldots,(r_{n-1},r_n)\}$ is not in $\mathbb{Z}[x]$, then there can be no polynomial $p(x)$ with integer coefficients such that $p(r_i)=r_{i+1}$ for each $i$.
\end{lemma}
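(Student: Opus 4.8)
The plan is to exploit the fact that the interpolation nodes $r_0,\ldots,r_{n-1}$ are distinct integers, so that the monic polynomial $w(x):=\prod_{i=0}^{n-1}(x-r_i)$ lies in $\mathbb{Z}[x]$, and then to run the division algorithm for $q$ by $w$ \emph{inside} $\mathbb{Z}[x]$. Concretely, since $w$ is monic of degree $n$ with integer coefficients, dividing the integer polynomial $q(x)$ by $w(x)$ produces a quotient $Q(x)$ and a remainder $R(x)$, both in $\mathbb{Z}[x]$, with $q=wQ+R$ and $\deg R<n$. The integrality of this division is the whole point of the argument: it is precisely because $w$ is monic that no denominators are ever introduced, so $Q$ and $R$ have integer coefficients.

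Next I would evaluate at the nodes. Because $w(r_i)=0$ for every $i\in\{0,\ldots,n-1\}$, we obtain $R(r_i)=q(r_i)-w(r_i)Q(r_i)=q(r_i)=r_{i+1}$ for all such $i$. Thus $R$ is an integer polynomial of degree at most $n-1$ passing through exactly the same $n$ points $(r_0,r_1),\ldots,(r_{n-1},r_n)$ as $p$.

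Finally I would invoke uniqueness of interpolation: both $p$ and $R$ have degree at most $n-1$ and agree at the $n$ distinct points $r_0,\ldots,r_{n-1}$, so the difference $p-R$ has degree at most $n-1$ while vanishing at $n$ distinct points, which forces $p-R=0$. Hence $p=R\in\mathbb{Z}[x]$, which is exactly the claim; the contrapositive then gives the second, ``phrased differently'' form of the statement.

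The argument is short, and I expect the only delicate point to be the hypothesis --- implicit in the phrase ``the interpolation polynomial for the set of points'' --- that the nodes $r_0,\ldots,r_{n-1}$ are pairwise distinct. This distinctness is what simultaneously makes $p$ the \emph{unique} polynomial of degree $\le n-1$ through the given points and guarantees $\deg w=n>\deg p$; without it neither the uniqueness step nor the degree bookkeeping goes through. Beyond recording that division by a monic integer polynomial keeps one within $\mathbb{Z}[x]$, there is no real obstacle.
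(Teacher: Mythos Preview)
Your proof is correct, and it shares the same skeleton as the paper's: both pass through the monic polynomial $w(x)=\prod_{i=0}^{n-1}(x-r_i)$. The difference is one of packaging. You invoke the division algorithm in $\mathbb{Z}[x]$ as a black box to produce an integral remainder $R$ of degree $<n$, and then identify $R=p$ via uniqueness of interpolation. The paper instead writes $q=p+w\cdot f$ with $f\in\mathbb{Q}[x]$ a priori and argues, by looking at the highest-degree non-integer coefficient of $f$, that $f$ must actually lie in $\mathbb{Z}[x]$, whence $p=q-wf\in\mathbb{Z}[x]$. Your route is slightly cleaner because it absorbs that coefficient-chasing into the standard fact that division by a monic integer polynomial never introduces denominators; the paper's route essentially reproves that fact in situ. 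Both arguments rely, as you correctly flag, on the implicit hypothesis that $r_0,\ldots,r_{n-1}$ are distinct.
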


\begin{proof}
    The condition on $q$ implies that there must be some polynomial $f(x)=a_0+\cdots+a_dx^d\in \mathbb{Q}[x]$ such that \begin{equation*}
        q(x)=p(x)+(x-r_0)\cdots(x-r_{n-1})f(x).
    \end{equation*}
    One notices readily that it is enough to show that $f(x)\in\mathbb{Z}[x]$. Suppose, if possible, that $f(x)\in\mathbb{Q}[x]\setminus \mathbb{Z}[x]$. Let $m$ be the largest integer of the set $\{0\ldots,d\}$ such that $a_m$ is not in $\mathbb{Z}$. Then one has that the coefficient of $x^{n+m}$ in $(x-r_0)\cdots(x-r_{n-1})f(x)$ is not in $\mathbb{Z}$, and as $\deg p<n$, one obtains that the coefficient of $x^{n+m}$ in $(x-r_0)\cdots(x-r_{n-1})f(x)$ is equal to the coefficient of $x^{n+m}$ in $q(x)$, i.e., $q(x)\not\in\mathbb{Z}[x]$, which is absurd.
\end{proof}

The next fact will be useful in the proofs of Theorems 3.2, 3.4 and 3.5.

\begin{fact}
       Suppose that $f$ is a polynomial over $\mathbb{Z}$, $f(a/b)=c/d$, and  $\gcd(c,d)=1$. Then $P(b)\not\supset P(d)$.
\end{fact}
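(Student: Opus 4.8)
The plan is to establish the Fact --- in the paper's notation $P(b)\not\supset P(d)$ --- by showing that the reduced denominator $d$ has no prime factor lying outside $b$, and this reduces to a single clearing-of-denominators computation. Write $f(x)=\sum_{i=0}^{n}f_i x^{i}$ with $f_i\in\mathbb{Z}$ and $n=\deg f$. Evaluating at $a/b$ and multiplying through by $b^{n}$ produces the integer
\[
M:=b^{n}f(a/b)=\sum_{i=0}^{n}f_i\,a^{i}b^{n-i}\in\mathbb{Z},
\]
so that $f(a/b)=M/b^{n}$. I emphasise that this identity needs no coprimality hypothesis on $a$ and $b$: the denominator $b^{n}$ is manufactured purely by the degree of $f$, and clearing exactly $b^{n}$ (rather than the denominator of a reduced form of $a/b$) is what keeps the argument uniform.

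Next I would compare $M/b^{n}$ with the reduced value $c/d$. From $c/d=f(a/b)=M/b^{n}$ together with $\gcd(c,d)=1$ one gets $c\,b^{n}=d\,M$, hence $d\mid c\,b^{n}$, and since $\gcd(c,d)=1$ this forces $d\mid b^{n}$. Therefore any prime dividing $d$ divides $b^{n}$ and so divides $b$: there is no prime that divides $d$ without dividing $b$, i.e.\ the prime support of $d$ cannot reach outside that of $b$. This non-existence of an ``escaping'' prime is exactly the content of the Fact.

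I expect no genuine obstacle, and the two points deserving care are bookkeeping ones. First, one must form the identity $f(a/b)=M/b^{n}$ for the \emph{given} fraction $a/b$, which is why the correct quantity to clear is $b^{n}$; this is also precisely what lets the conclusion hold with no assumption that $\gcd(a,b)=1$. Second, the passage $d\mid c\,b^{n}\Rightarrow d\mid b^{n}$ uses only $\gcd(c,d)=1$, after which $d\mid b^{n}$ yields the prime-support statement because a prime dividing a power of $b$ already divides $b$. The degenerate cases are immediate and need only be recorded: if $f$ is constant then $n=0$, $M=f_{0}$, $b^{n}=1$, so $d=1$ and $P(d)=\varnothing$; and if $b=\pm 1$ then $f(a/b)\in\mathbb{Z}$, whence again $d=1$. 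In every case $d$ contributes no prime beyond those of $b$, which is the Fact.
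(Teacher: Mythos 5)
Your proof is correct and is essentially the paper's own argument: the paper likewise clears denominators, writing $f(a/b)=\bigl(\alpha_0 b^n+\alpha_1 a b^{n-1}+\cdots+\alpha_n a^n\bigr)/b^n$, and leaves implicit the divisibility step $d\mid b^{n}$ that you spell out via $\gcd(c,d)=1$. One remark: what you actually prove (and what the paper uses in practice, e.g.\ to rule out $p(2/3)=-15/4$ in Theorem 3.2) is $P(d)\subseteq P(b)$, so the printed conclusion ``$P(b)\not\supset P(d)$'' is evidently a typo for this containment, and your reading is the intended one.
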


\begin{proof}
    Let $f(x)=\alpha_0+\alpha_1x+\cdots+\alpha_dx^d$. Then $$f(a/b)=\alpha_0+\alpha_1(a/b)+\cdots+\alpha_d(a/b)^d=\frac{\alpha_0b^d+\alpha_1ab^{d-1}+\cdots+\alpha_da^d}{b^d}$$ and the rest follows from the hypothesis.
\end{proof}

We end this section by a fact that shows a one-to-one correspondence between $N_{r,i}^d$ and $N_{-r,i}^d$.

\begin{fact}
Let $u(x)$ be an integer polynomial of degree $d$, $i\in\mathbb N$ and $r\in\mathbb{Z}\setminus\{0\}$. Define $v(x):=-u(-x)$. Then 
\begin{equation*}
    u(x)\in N_{r,i}^d \textit{ if and only if } v(x)\in N_{-r,i}^d
\end{equation*}
\end{fact}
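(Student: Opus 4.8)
The plan is to prove the biconditional by directly relating the iterates of $v$ to those of $u$. The central observation is that the map $x \mapsto -x$ conjugates $u$ and $v$: by definition $v(x) = -u(-x)$, so I would first establish the key intertwining identity
\begin{equation*}
    v^{(n)}(x) = -u^{(n)}(-x) \quad \text{for all } n \ge 0.
\end{equation*}
This follows by a straightforward induction on $n$. The base case $n=1$ is the definition of $v$ (and $n=0$ is trivial since $v^{(0)}(x)=x=-(-x)$). For the inductive step, assuming $v^{(n)}(x) = -u^{(n)}(-x)$, I would compute
\begin{equation*}
    v^{(n+1)}(x) = v\bigl(v^{(n)}(x)\bigr) = v\bigl(-u^{(n)}(-x)\bigr) = -u\bigl(-(-u^{(n)}(-x))\bigr) = -u\bigl(u^{(n)}(-x)\bigr) = -u^{(n+1)}(-x),
\end{equation*}
using the definition of $v$ in the third equality.

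With the identity in hand, I would evaluate at $x = -r$ to obtain $v^{(n)}(-r) = -u^{(n)}(r)$ for every $n$. Consequently $v^{(n)}(-r) = 0$ if and only if $u^{(n)}(r) = 0$, so the orbit of $v$ at $-r$ hits $0$ at exactly the same indices $n$ as the orbit of $u$ at $r$. This immediately yields that $u$ is nilpotent at $r$ precisely when $v$ is nilpotent at $-r$, and that the minimal such $n$ — the index of nilpotency — is the same on both sides; hence $u \in N_{r,i}$ if and only if $v \in N_{-r,i}$.

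It remains to match the degree condition. Since $v(x) = -u(-x)$ is obtained from $u$ by the invertible substitution $x \mapsto -x$ followed by negation, neither operation changes the degree, so $\deg v = \deg u = d$. Combining this with the index matching gives $u \in N_{r,i}^d \iff v \in N_{-r,i}^d$, which is the desired statement. I do not expect any genuine obstacle here: the proof is essentially the verification of the conjugation identity, and the only point requiring minor care is confirming that the equivalence preserves the \emph{minimal} index (not merely the existence of some hitting time), which is immediate once the iterates are seen to vanish simultaneously.
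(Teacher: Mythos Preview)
Your proposal is correct and follows essentially the same approach as the paper: the paper's proof is the one-line observation that from $v(-x)=-u(x)$ one gets by induction $v^{(n)}(-r)=-u^{(n)}(r)$, and you have simply written out that induction and the degree-preservation remark in full.
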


\begin{proof}
    Since $v(-x)=-u(x)$, one obtains by induction that $v^{(n)}(-r)=-u^{(n)}(r)$, from which the fact follows. \end{proof}

\section{The Classification of Polynomials in $N_r$ for $|r|\in\{2, 3, 4\}$}

In this section we develop the necessary groundwork which is needed to prove the main results in the next section. We first make the following definition.

\begin{definition}
    Let $u\in N_{r,m}$ for some integer $r$ and some positive integer $m$. We say that $\{r,u(r),\ldots,u^{(m-1)}(r),0\}$ is the \textit{finite sequence associated to} $u$ at $r$.
\end{definition} 

We now classify the polynomials of $N_2$.

\begin{theorem} The following is the list of all polynomials in $N_2$:
\begin{enumerate}

    \item[(1)] $(x-2)p(x)$, $p(x)\in\mathbb{Z}[x]$ (Nilpotent of index 1 with the associated finite sequence $\{2,0\}$).
    
    \item[(2)] $x-1+p(x)(x-1)(x-2)$, $p(x)\in\mathbb{Z}[x]$ (Nilpotent of index 2 with the associated finite sequence $\{2,1,0\}$).

    \item[(3)] $-2(x-4)+p(x)(x-2)(x-4)$, $p(x)\in\mathbb{Z}[x]$ (Nilpotent of index 2 with the associated finite sequence $\{2,4,0\}$). 

    \item[(4)]  $-3(x-3)+p(x)(x-2)(x-3)$, $p(x)\in\mathbb{Z}[x]$ (Nilpotent of index 2 with the associated finite sequence $\{2,3,0\}$).

    \item[(5)]  $-(x-1)(x - 6)+p(x)(x-2)(x-4)(x-6)$, $p(x)\in\mathbb{Z}[x]$ (Nilpotent of index 3 with the associated finite sequence $\{2,4,6,0\}$).

    \item[(6)] $-(x-5)(x^2-4x+5)+p(x)(x-2)(x-3)(x-4)(x-5)$, $p(x)\in\mathbb{Z}[x]$ (Nilpotent of index 4 with the associated finite sequence $\{2,3,4,5,0\}$).
\end{enumerate}
    
\end{theorem}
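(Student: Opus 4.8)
The plan is to recast the condition $u\in N_{2,m}$ as a set of combinatorial constraints on the associated finite sequence $2=s_0,s_1,\dots,s_{m-1},s_m=0$ (Definition 3.1), show that only the six listed sequences $\{2,0\}$, $\{2,1,0\}$, $\{2,3,0\}$, $\{2,4,0\}$, $\{2,4,6,0\}$, $\{2,3,4,5,0\}$ can occur, and then read off the stated normal forms. First I would record the structural facts I will lean on. The entries $s_0,\dots,s_m$ must be pairwise distinct, since a repeat would make the orbit eventually periodic and so never reach $0$ at a minimal index; the consecutive differences $d_i:=u_i(2)=s_{i+1}-s_i$ are all nonzero and form a divisibility chain $d_0\mid d_1\mid\cdots\mid d_{m-1}$ (the divisibility property of the $u_i$ recorded after (1.2)), whence $|d_0|\le|d_1|\le\cdots\le|d_{m-1}|$; and they telescope to $\sum_{i=0}^{m-1}d_i=s_m-s_0=-2$. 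Finally $C_2=3$, so Lemma 2.1 caps any run $u(2)=3,u(3)=4,\dots$ of unit increments at length $3$, and Lemma 2.2 lets me discard any candidate sequence whose degree-$(m-1)$ interpolant through $(s_0,s_1),\dots,(s_{m-1},s_m)$ fails to lie in $\mathbb{Z}[x]$.

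The first decisive step is to pin down $s_1=u(2)$. Since $d_0\mid d_i$ for every $i$, we get $d_0\mid\sum_i d_i=-2$, so $d_0\in\{\pm1,\pm2\}$ and $s_1\in\{0,1,3,4\}$. Discarding $s_1=0$ (that is index $1$, i.e.\ family (1)) leaves the three seeds $s_1\in\{1,3,4\}$. For index $2$ each seed already gives a family: the linear interpolant through two points has slope $d_1/d_0$, which is an integer precisely because $d_0\mid d_1$, so it is automatically integral, and $\{2,1,0\}$, $\{2,3,0\}$, $\{2,4,0\}$ produce families (2), (4), (3) respectively.

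For longer orbits I would extend each seed one step at a time, at every stage using the divisibility chain together with the non-decreasing absolute values and the running total $-2$ to list the finitely many admissible next differences, and discarding any choice that repeats an earlier $s_i$, that violates Lemma 2.1, or whose interpolant is non-integral by Lemma 2.2. The seed $s_1=1$ collapses at once: $u$ is then nilpotent at $1$ of index $m-1$, so by Fact 1.2 the suborbit $1,s_2,\dots,0$ is one of $\{1,0\},\{1,2,0\},\{1,2,3,0\}$, and the last two contain the value $2=s_0$, forcing a repeat; hence $s_1=1$ contributes nothing beyond family (2). The seeds $s_1=3$ and $s_1=4$ produce exactly the two remaining survivors, $\{2,3,4,5,0\}$ (family (6), where Lemma 2.1 is tight since $k=C_2=3$) and $\{2,4,6,0\}$ (family (5)), while every other branch dies by a repeat, by Lemma 2.1, or by non-integrality. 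Once the admissible sequences are in hand, the realizing polynomials come out uniformly: if $g$ is the integral degree-$(m-1)$ interpolant through $(s_0,s_1),\dots,(s_{m-1},s_m)$, then any integer $u$ with $u(s_i)=s_{i+1}$ has $u-g$ vanishing at the $m$ distinct points $s_0,\dots,s_{m-1}$, so $u(x)=g(x)+p(x)\prod_{i=0}^{m-1}(x-s_i)$; computing $g$ for each surviving sequence reproduces the six displayed forms (e.g.\ $g(x)=-(x-5)(x^2-4x+5)$ for $\{2,3,4,5,0\}$).

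I expect the main obstacle to be certifying the bound $m\le 4$, i.e.\ that the case analysis in the $s_1\in\{3,4\}$ branches really terminates. The delicate point is that divisibility, the telescoping total $-2$, and distinctness alone do not obviously forbid long orbits built from small repeated jumps, namely arithmetic progressions such as $2,3,4,5,\dots$ or $2,4,6,8,\dots$ closed off by one large jump to $0$. Ruling these out is exactly where Lemma 2.1 and the integrality test of Lemma 2.2 must act together: the unit-increment progressions are killed by Lemma 2.1 ($k\le C_2=3$), while for every other constant step size the closing value forces the leading coefficient of the interpolant to be non-integral once the progression is long enough, and I will need to argue carefully that each such infinite family of candidate continuations is eventually eliminated, so that only finitely many sequences survive and exactly families (5) and (6) appear beyond index $2$.
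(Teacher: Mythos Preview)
Your overall strategy --- enumerate the admissible finite orbits $2,s_1,\dots,0$ using the divisibility chain $d_0\mid d_1\mid\cdots$, the telescoping sum $-2$, Lemma 2.1 to cap unit-step runs, and Lemma 2.2 to discard non-integral interpolants --- is sound and overlaps substantially with the paper's argument. The case $s_1=1$ (via Fact 1.2) and the index-$\le 2$ families are handled essentially the same way in both. The divergence is in how the longer branches are terminated.

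The paper does not continue the step-by-step enumeration. Instead, whenever the orbit first lands in $a\mathbb{Z}$ with $a>1$ (so $u(2)=4$, or $u(3)=6$, or $u(5)=10$ in the respective cases), it passes to the scaled polynomial $v(x):=\tfrac{1}{a}u(ax)$. One checks $v\in\mathbb{Z}[x]$ (the constant term is $u(0)/a$, and $a\mid u(0)$ since $u(0)\equiv u(a)\pmod a$), and then $v$ is nilpotent at $1$, so Fact 1.2 immediately bounds the residual index by $3$ and dictates the form of $v$. The earlier orbit values impose extra constraints such as $v(2/3)=1$, and Fact 2.3 shows these are incompatible with integer coefficients. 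Each branch thus closes in a single reduction-plus-lookup, and the paper also extracts a clean parity contradiction in its $k=2$ case ($u(4)$ even forces $u(0)$ even, contradicting $u(2)=3$).

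Your proposed alternative --- prune branches by testing integrality of the interpolant via Lemma 2.2 --- would in principle reach the same endpoint, but as you yourself flag, it does not obviously terminate: the divisibility, distinctness, and telescoping constraints allow infinitely many candidate prefixes, and not only the constant-step progressions you single out (e.g.\ $2,3,6,9,12,\dots$ or $2,4,6,12,\dots$ survive the combinatorial filters indefinitely). You would need a uniform argument that \emph{every} sufficiently long prefix has non-integral interpolant, and your sketch does not yet supply one. The scaling reduction $v(x)=\tfrac{1}{a}u(ax)$ is exactly the device that converts this open-ended search into a finite check against the known $N_1$ list; I would incorporate it into your $s_1\in\{3,4\}$ branches, after which the termination obstacle dissolves.
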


\begin{proof}  Let $u$ be a non-zero polynomial in $N_{2,m}$ for some $m\in\mathbb{N}$. That means for any $k$ satisfying (2.3) one obtains by Lemma 2.1 that $0\le k\le C_2=3$. Also, let $u_i(2)$'s be as in (1.2).\\

\noindent{\tt Case 1.} Let $k=0$. That means $u(2)\neq 3$, i.e., $u_0(2)\neq 1$. Since $-2=\sum\limits_{i=0}^{m-1} u_i(2)\equiv 0 \pmod {u_0(2)}$, one has $u_0(2)|2$. Then $u_0(2)\in\{-1,\pm 2\}$.

 If $u_0(2)=-1$, then $u(2)=1$ and so it follows from Fact 1.2(a) that $u$ must be of the form $(x-1)p(x)$, for some polynomial $p(x)$ with $p(2)=1$.

 If $u_0(2)=-2$, then $u(2)=0$, i.e., $u(x)=(x-2)p(x)$, for some non-zero polynomial $p(x)$. 
 
If $u_0(2)=2$, then $u(2)=4$. Let $v$ be the reduction of $u$ from $2$ to $1$. Then $v\in N_{1,m}$ and $v(1)=2$. Then it follows from Fact 1.2 that $v$ must be of one of the following forms:

\begin{itemize}
    \item $v(x)=-2x+4+p(x)(x-1)(x-2)$ for some polynomial $p(x)$ (see Fact 1.2(b)), in which case $u(x)=2v\left(\frac{x}{2}\right)=-2x+8+\frac{1}{2}p\left(\frac{x}{2}\right)(x-2)(x-4)$ with the condition that $\frac{1}{2}p\left(\frac{x}{2}\right)\in \mathbb{Z}[x]$ (see Theorem 3.2(3)).

    \item $v(x)=-2x^2+7x-3+p(x)(x-1)(x-2)(x-3)$ for some polynomial $p(x)$ (see Fact 1.2(c)), in which case $u(x)=2v\left(\frac{x}{2}\right)=-x^2+7x-6+\frac{1}{4}p\left(\frac{x}{2}\right)(x-2)(x-4)(x-6)$, with the condition that $\frac{1}{4}p\left(\frac{x}{2}\right)\in \mathbb{Z}[x]$ (see Theorem 3.2(5)).
\end{itemize}

\noindent{\tt Case 2.} Let $k=1$. That means $u(2)=3$ and $u(3)\neq 4$. Since $-3=\sum\limits_{i=1}^{m-1}u_i(2)\equiv 0 \pmod{u_1(2)}$, one has $u_1(2)|3$. As $u$ is nilpotent at 2 and $u(3)\neq 4$, one immediately sees that $u_1(2)=\{\pm 3\}$.

If $u_1(2)=-3$, then $u(3)=0$, and so $u(x)$ is of the form $(x-3)p(x)$, for some $p(x)\in\mathbb{Z}[x]$ with $p(2)=-3$.

If $u_1(2)=3$, then $u(3)=6$. Let $v$ be the reduction of $u$ from $3$ to $1$. Then 
 $v\in N_{1,m}$, $v\left(\frac{2}{3}\right)=1$, and $v(1)=2$. From Fact 1.2 it follows that $v$ must be one of the two following forms:

 \begin{itemize}
     \item $v(x)=-2x+4+p(x)(x-1)(x-2)$ for some suitable $p(x)\in\mathbb{Z}[x]$. Then $1=v\left(\frac{2}{3}\right)=\frac{8}{3}+\frac{4}{9}p\left(\frac{2}{3}\right),$ i.e., $p\left(\frac{2}{3}\right)=-\frac{15}{4}$, which is impossible by Fact 2.2.

     \item $v(x)=-2x^2+7x-3+p(x)(x-1)(x-2)(x-3)$ for some suitable $p(x)\in\mathbb{Z}[x]$. Then $1=v\left(\frac{2}{3}\right)=\frac{7}{9}-\frac{28}{27}p\left(\frac{2}{3}\right),$ i.e., $p\left(\frac{2}{3}\right)=-\frac{3}{14}$, which is impossible by Fact 2.2.
 \end{itemize}

\noindent{\tt Case 3.} Let $k=2$. That means $u(2)=3,~u(3)=4$, and $u(4)\neq 3,5$. Then $u_2(2)|4$ with $u_2(2)\neq\pm 1$, i.e., $u_2(2)\in\{2,\pm 4\}$, i.e., $u(4)\in\{0,6,8\}$, i.e., in particular, $u(0)$ is even, which is false as $u(2)=3$.\\

\noindent{\tt Case 4.} Let $k=3$. That means $u(2)=3,~u(3)=4,~u(4)=5,$ and $u(5)\neq 4,6$. Then $u_3(2)|5$ with $u_3(2)\neq \pm 1$, and one sees that $u_3(2)\in\{\pm 5\}$.

If $u_3(2)=-5$, then $u(5)=0$, i.e., $u(x)=(x-5)p(x)$, for some $p(x)\in\mathbb{Z}[x]$, with $p(2)=-1,~p(3)=-2$ and $p(4)=-5$.

If $u_3(2)=5$, then $u(5)=10$. Let $v$ be the reduction of $u$ from $5$ to $1$. Then $v\in N_{1,m}$, $v\left(\frac{2}{5}\right)=\frac{3}{5}, v\left(\frac{3}{5}\right)=\frac{4}{5}$, and $v(1)=2$. Similar to Case 2 above, one can check that no such $v$ having integer coefficients can exist. \end{proof}

We will use the classification of polynomials in $N_{-2}$ later in Theorem 4.10, and so the following corollary, which follows directly from Theorem 3.2 and Fact 2.4, is recorded here for convenience.

\begin{corollary}
    The following is the list of all polynomials in $N_{-2}$:
\begin{enumerate}
    \item[(1)] $(x+2)p(x)$, $p(x)\in\mathbb{Z}[x]$ (Nilpotent of index 1 with the associated finite sequence $\{-2,0\}$).
    
    \item[(2)] $x+1+p(x)(x+1)(x+2)$, $p(x)\in\mathbb{Z}[x]$ (Nilpotent of index 2 with the associated finite sequence $\{-2,-1,0\}$).

    \item[(3)] $-2(x+4)+p(x)(x+2)(x+4)$, $p(x)\in\mathbb{Z}[x]$ (Nilpotent of index 2 with the associated finite sequence $\{-2,-4,0\}$). 

    \item[(4)]  $-3(x+3)+p(x)(x+2)(x+3)$, $p(x)\in\mathbb{Z}[x]$ (Nilpotent of index 2 with the associated finite sequence $\{-2,-3,0\}$).

    \item[(5)]  $(x+1)(x+6)+p(x)(x+2)(x+4)(x+6)$,  $p(x)\in\mathbb{Z}[x]$ (Nilpotent of index 3 with the associated finite sequence $\{-2,-4,-6,0\}$).

    \item[(6)] $-(x+5)(x^2+4x+5)+p(x)(x+2)(x+3)(x+4)(x+5)$, $p(x)\in\mathbb{Z}[x]$ (Nilpotent of index 4 with the associated finite sequence $\{-2,-3,-4,-5,0\}$).

\end{enumerate}
\end{corollary}

Next we classify the polynomials of $N_3$ using the list in Theorem 3.2.

\begin{theorem} The following is the list of all polynomials in $N_3$:
\begin{enumerate}

    \item[(1)] $(x-3)p(x)$, $p(x)\in\mathbb{Z}[x]$ (Nilpotent of index 1 with the associated finite sequence $\{3,0\}$).

    \item[(2)] $2(x-2)+p(x)(x-2)(x-3)$, $p(x)\in\mathbb{Z}[x]$ (Nilpotent of index 2 with the associated finite sequence $\{3,2,0\}$).

    \item[(3)] $-2(x-6)+p(x)(x-3)(x-6)$, $p(x)\in\mathbb{Z}[x]$ (Nilpotent of index 2 with the associated finite sequence $\{3,6,0\}$).

    \item[(4)] $-4(x-4)+p(x)(x-3)(x-4)$, $p(x)\in\mathbb{Z}[x]$ (Nilpotent of index 2 with the associated finite sequence $\{3,4,0\}$).

    \item[(5)] $x-1+p(x)(x-1)(x-2)(x-3)$, $p(x)\in\mathbb{Z}[x]$ (Nilpotent of index 3 with the associated finite sequence $\{3,2,1,0\}$).

    \item[(6)] $-(3x-13)(x-2)+p(x)(x-2)(x-3)(x-4)$, $p(x)\in\mathbb{Z}[x]$ (Nilpotent of index 3 with the associated finite sequence $\{3,4,2,0\}$).

    \item[(7)] $-(x-5)(3x-7)+p(x)(x-3)(x-4)(x-5)$, $p(x)\in\mathbb{Z}[x]$ (Nilpotent of index 3 with the associated finite sequence $\{3,4,5,0\}$).

    \item[(8)] $-2(x-4)+p(x)(x-2)(x-3)(x-4)$, $p(x)\in\mathbb{Z}[x]$ (Nilpotent of index 3 with the associated finite sequence $\{3,2,4,0\}$).
\end{enumerate}
    
\end{theorem}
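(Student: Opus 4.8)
The plan is to mirror the proof of Theorem 3.2, running a case analysis on the length $k$ of the initial increasing run $u(3)=4,\dots,u(3+k-1)=3+k$ with $u(3+k)\neq 3+k+1$. Since $C_3=3$, Lemma 2.1 caps $k$ at $3$, so there are only four cases $k\in\{0,1,2,3\}$. In each case the telescoping identity $\sum_{i=k}^{m-1}u_i(3)=-u^{(k)}(3)=-(3+k)$ together with $u_k(3)\mid u_i(3)$ forces $u_k(3)\mid(3+k)$, which leaves only finitely many candidate values for the next orbit point $u(3+k)$, and I would enumerate them.

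First I would dispose of the \emph{down-steps}, where the orbit drops to a value in $\{1,2\}$. If $u(3)=2$ then $u\in N_{2,m-1}$, and if $u(3)=4,\,u(4)=2$ then $u\in N_{2,m-2}$; in either situation I would run through the six families of Theorem 3.2 and impose the extra point conditions $u(3)=2$ (respectively $u(3)=4,\,u(4)=2$), discarding the families that are incompatible and reading off the interpolating form of the survivors. This produces items (2), (5), (8) from the first situation and item (6) from the second; meanwhile $u(3)=0$ gives item (1) at once, and the terminal values $u(4)=0$ and $u(5)=0$ give items (4) and (7).

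The \emph{up-steps} are where the real work lies. When $u(3)=6$, $u(4)\in\{6,8\}$, or $u(5)=10$, the orbit climbs and no known classification applies directly, so I would rescale. The crucial observation is that the scaling factor always divides $u(0)$: for instance $u(3)=6$ gives $u(0)\equiv u(3)\equiv 0 \pmod 3$, so $v(x):=\tfrac13 u(3x)$ genuinely lies in $\mathbb{Z}[x]$, is nilpotent at $1$, and satisfies $v(1)=2$. Applying Fact 1.2 to $v$ then yields item (3) from the index-$2$ family and forces the orbit $\{3,6,9,0\}$ in the index-$3$ family, which I would kill by Lemma 2.2 since the quadratic interpolant through $(3,6),(6,9),(9,0)$ has leading coefficient $-\tfrac23\notin\mathbb{Z}$. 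The analogous reductions, scaling $4\to 2$ into $N_2$ for $u(4)\in\{6,8\}$ and $5\to 1$ into $N_1$ for $u(5)=10$, leave equations of the shape $p(3/2)=c/d$ or $p(4/5)=c/d$ whose denominators carry primes other than $2$ (respectively $5$), contradicting Fact 2.2. I would also discard the two-cycles $u(4)=3$ and $u(5)=4$ outright, as those orbits alternate and never reach $0$.

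Finally, the case $k=3$ (orbit beginning $3,4,5,6$) must be shown empty, and this is precisely what pins the maximal index at $3$. Here $u_3(3)\mid 6$ restricts $u(6)$ to a short list, and the congruences $u(6)\equiv u(4)\equiv 1\pmod 2$ and $u(6)\equiv u(3)\equiv 1\pmod 3$ eliminate every candidate. I expect the main obstacle to be the up-step bookkeeping: in each reduction one must first verify that the rescaled polynomial is genuinely integral (via the divisibility $u(0)\equiv u(3+k)\pmod{\text{scale}}$) before quoting Fact 1.2 or Theorem 3.2, and then track the resulting denominators carefully enough to invoke Fact 2.2. Once the surviving finite sequences are identified, matching each to the explicit polynomial displayed in the statement is a routine Lagrange interpolation.
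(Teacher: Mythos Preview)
Your proposal is correct and follows essentially the same route as the paper: split on $k\in\{0,1,2,3\}$ via Lemma~2.1, use $u_k(3)\mid(3+k)$ to list the finitely many next orbit values, handle down-steps through the $N_2$ classification (Theorem~3.2), and kill up-steps by rescaling into $N_1$ or $N_2$ and invoking Facts~1.2, 2.3 and Lemma~2.2. The only cosmetic differences are that the paper scales $4\to 1$ (rather than your $4\to 2$) when $u(4)=8$, and that your congruence argument $u(6)\equiv u(4)\pmod 2$, $u(6)\equiv u(3)\pmod 3$ for $k=3$ is crisper than the paper's, which simply defers to ``similar arguments as above.''
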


\begin{proof} Let $u$ be a non-zero polynomial in $N_{3,m}$, for some $m\in\mathbb{N}$. That means for any $k$ satisfying (2.3) one obtains by Lemma 2.1 that $0\le k\le C_3=3$. Also, let $u_i(3)$'s be as in (1.2).\\

\noindent{\tt Case 1.} Let $k=0$. That means $u(3)\neq 4$, i.e., $u_0(3)\neq 1$. Since $-3=\sum\limits_{i=0}^{m-1} u_i(3)\equiv 0 \pmod{u_0(3)}$, one has $u_0(3)|3$. i.e., $u_0(3)\in\{-1,\pm 3\}$, as $u_0(3)$ cannot be 1.\\

 If $u_0(3)=-1$, then $u(3)=2$, i.e., $u\in N_{2,m-1}$. It follows from the list in Theorem 3.2 that $u$ must be of one of the following forms:\begin{itemize}
    
    \item $u(x)=(x-2)p(x)$, with $p(3)=2$ (see Theorem 3.2(1)).

    \item $u(x)=(x-1)^2+p(x)(x-1)(x-2)$, with $p(3)=-1$ (see Theorem 3.2(2)).

    \item $u(x)=-2x+8+p(x)(x-2)(x-4)$, with $p(3)=0$ (see Theorem 3.2(3)). 
 \end{itemize}

 If $u_0(3)=-3$, then $u(3)=0$, i.e., $u(x)=(x-3)p(x)$, for some non-zero polynomial $p(x)$.

 If $u_0(3)=3$, then $u(3)=6$. Let $v$ be the reduction of $u$ from $3$ to $1$. So $v(1)=2$ and $v\in N_{1,m}.$ Now it follows from Fact 1.2 that $v$ must be of one of the following forms:
\begin{itemize}
    \item $v(x)=-2x+4+p(x)(x-1)(x-2)$ for some polynomial $p(x)$ (see Fact 1.2(b)), in which case $u(x)=-2x+12+\frac{1}{3}p\left(\frac{x}{3}\right)(x-3)(x-6)$ with the condition that $\frac{1}{3}p\left(\frac{x}{3}\right)$ is a polynomial over $\mathbb{Z}$. 

    \item $v(x)=-2x^2+7x-3+p(x)(x-1)(x-2)(x-3)$ for some polynomial $p(x)$ (see Fact 1.2(c)), in which case $u(x)=-\frac{2}{3}x^2+7x-9+\frac{1}{9}p\left(\frac{x}{3}\right)(x-3)(x-6)(x-9)$. However it follows from Lemma 2.2 that this $u$ cannot be a polynomial over $\mathbb{Z}$ for any choice of an integer polynomial $p(x)$.
\end{itemize}

\noindent{\tt Case 2.} Let $k=1$. That means $u(3)=4$, and $u(4)\neq 5$. Then one has $u_1(3)|4$ with $u_1(3)\neq\pm 1$, i.e., $u_1(3)\in\{\pm 2,\pm 4\}$. (Note that $u_1(3)$ cannot be $\pm 1$, as that would imply $u(4)=3$ in which case it cannot be nilpotent, or $u(4)=5$, which is not possible in Case 2.) If $u_1(3)=-2$, then $u(4)=2$, i.e., $u\in N_{2,m-1}$. From Theorem 3.2 one sees that $u$ must be of the form $(x-2)p(x)$, with $p(3)=4,p(4)=1$.

 If $u_1(3)=2$, then $u(4)=6$. Let $v$ be the reduction of $u$ from $4$ to $2$. Then $v\left(\frac{3}{2}\right)=2, v(2)=3$, and $v\in N_{2,m-1}$. Using the list in Theorem 3.2 and Fact 2.3 one can check that (similar to what was done in Case 2 of Theorem 3.2) no such polynomial $v$ with integer coefficients can exist. 

 If $u_1(3)=-4$, then $u(4)=0,$ i.e., $u(x)=(x-4)p(x)$ with $p(3)=-4$.

 If $u_1(3)=4,$ then $u(4)=8$. Let $v$ be the reduction of $u$ from $4$ to $1$. Then $v\in N_{1,m-1}, v\left(\frac{3}{4}\right)=1$, and $v(1)=2$. Using Fact 1.2 and Fact 2.3 one can check that (similar to what was done in Case 2 of Theorem 3.2) no such $v$ is possible with integer coefficients.\\

\noindent{\tt Case 3.} Let $k=2$. That means $u(3)=4,~u(4)=5$, but $u(5)\neq 4, 6$. Then one has $u_2(3)|5$ with $u_2(3)\neq 1$, i.e., $u_2(3)\in\{\pm 1,\pm 5\}$.

 If $u_2(3)=-5$, then $u(5)=0$, i.e., $u(x)=(x-5)p(x)$ with $p(3)=-2,p(4)=-5$.

 If $u_2(3)=5$, then $u(5)=10$. Let $v$ be the reduction of $u$ from $5$ to $1$. Then $v\left(\frac{3}{5}\right)=\frac{4}{5},v\left(\frac{4}{5}\right)=1, v(1)=2$ and $v\in N_{1,m-2}$. Using Fact 1.2 and Fact 2.3 one can check that (similar to what was done in Case 2 of Theorem 3.2) no such $v$ is possible with integer coefficients.\\

\noindent{\tt Case 4.} Let $k=3$. That means $u(3)=4, u(4)=5, u(5)=6$, but $u(6)\neq 7$. Then $u_3(3)|6$, i.e., $u_3(3)\in\{\pm 1,\pm 2,\pm 3,\pm 6\}$. Note that $u_3(3)$ cannot be $\pm 1$, $-2$ or $-3$. The rest of the cases can be checked using similar arguments as above, and one shall see that no new polynomials arise here. \end{proof}

Finally we are ready to classify the polynomials of $N_4$.

\begin{theorem} The following is the list of all polynomials in $N_4$:
\begin{enumerate}

    \item[(1)] $(x-4)p(x)$, $p(x)\in\mathbb{Z}[x]$ (Nilpotent of index 1 with the associated finite sequence $\{4,0\}$).
    
    \item[(2)] $3(x-3)+p(x)(x-3)(x-4)$, $p(x)\in\mathbb{Z}[x]$ (Nilpotent of index 2 with the associated finite sequence $\{4,3,0\}$).

    \item[(3)] $-3(x-6)+p(x)(x-4)(x-6)$, $p(x)\in\mathbb{Z}[x]$ (Nilpotent of index 2 with the associated finite sequence $\{4,6,0\}$).

    \item[(4)] $x-2+p(x)(x-2)(x-4)$, $p(x)\in\mathbb{Z}[x]$ (Nilpotent of index 2 with the associated finite sequence $\{4,2,0\}$).

    \item[(5)] $-2(x-8)+p(x)(x-4)(x-8)$, $p(x)\in\mathbb{Z}[x]$ (Nilpotent of index 2 with the associated finite sequence $\{4,8,0\}$).

    \item[(6)] $-5(x-5)+p(x)(x-4)(x-5)$, $p(x)\in\mathbb{Z}[x]$ (Nilpotent of index 2 with the associated finite sequence $\{4,5,0\}$). 

    \item[(7)] $x-1+p(x)(x-1)(x-2)(x-3)(x-4)$, $p(x)\in\mathbb{Z}[x]$ (Nilpotent of index 4 with the associated finite sequence $\{4,3,2,1,0\}$).

    \item[(8)] $-(2x-13)(x-3)+p(x)(x-3)(x-4)(x-5)(x-6),$ $p(x)\in\mathbb{Z}[x]$ (Nilpotent of index 4 with the associated finite sequence $\{4,5,6,3,0\}$).
\end{enumerate}

\end{theorem}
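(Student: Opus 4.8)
The plan is to mimic the case analysis used for Theorems 3.2 and 3.4, now at $r=4$. Since $C_4=\max\{s:4\ge s!-s-1\}=3$, Lemma 2.1 says that the least $k\ge0$ with $u(4)=5,\dots,u(3+k)=4+k$ but $u(4+k)\neq 5+k$ satisfies $k\in\{0,1,2,3\}$, and these four values organize the whole proof. Abbreviating $u_i:=u_i(4)$, I will use throughout that $u_i\mid u_j$ for $i\le j$ and that $\sum_{i=k}^{m-1}u_i=u^{(m)}(4)-u^{(k)}(4)=-(4+k)$, so that $u_k\mid(4+k)$, exactly as in the proof of Lemma 2.1.

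For $k=0$ the constraint $u_0\mid4$ with $u_0\neq1$ leaves $u_0\in\{-1,-2,2,-4,4\}$. The negative values send the orbit onto a smaller integer already treated: $u_0=-4$ gives $u=(x-4)p(x)$ (entry (1)); $u_0=-1$ means $u(4)=3$, so $u\in N_{3,m-1}$, and intersecting the list of Theorem 3.4 with the condition $u(4)=3$ leaves only its forms (1) and (5), yielding entries (2) and (7); $u_0=-2$ means $u(4)=2$, so $u\in N_{2,m-1}$, and Theorem 3.2 with $u(4)=2$ leaves only $(x-2)p(x)$, giving entry (4). The positive values call for a reduction $v(x)=u(cx)/c$, which lies in $\mathbb{Z}[x]$ because $c\mid u(0)$ (indeed $u(0)\equiv u(4)\equiv0\pmod c$): with $c=2$ I get $v\in N_{2,\cdot}$, $v(2)=3$, and with $c=4$ I get $v\in N_{1,\cdot}$, $v(1)=2$. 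Matching against Theorem 3.2 and Fact 1.2 respectively, each leaves two candidates; the ``short'' candidate lifts back to an integer polynomial under the stated divisibility condition on $p$ (entries (3) and (5)), while the ``long'' candidate would force the orbit $\{4,6,8,10,0\}$ (resp. $\{4,8,12,0\}$), which Lemma 2.2 excludes because the interpolating polynomial through those points is not in $\mathbb{Z}[x]$.

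For $k\ge1$ I exploit that $u(x)-(x+1)$ vanishes at $4,\dots,3+k$, so $u(x)=(x+1)+(x-4)\cdots(x-3-k)\,h(x)$ with $h\in\mathbb{Z}[x]$; evaluating at $x=4+k$ gives $u_k=1+k!\,h(4+k)$, hence $u_k\equiv1\pmod{k!}$ (vacuous for $k=1$). Combined with $u_k\mid(4+k)$ and the exclusions $u_k=1$ (contrary to the choice of $k$) and $u_k=-1$ (a $2$-cycle, hence non-nilpotent), this pins $u_k$ to a short list. For $k=1$ it gives $u_1\in\{5,-5\}$: the value $u_1=-5$ yields $u=(x-5)p(x)$ with $p(4)=-5$, which is entry (6), while $u_1=5$ reduces with $c=5$ to $v\in N_{1,\cdot}$ with $v(1)=2$ and the extra condition $v(4/5)=1$; both forms of Fact 1.2 then force $p(4/5)$ into a value whose reduced denominator carries a prime other than $5$, impossible by Fact 2.3. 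For $k=3$ the congruence $u_3\equiv1\pmod6$ with $u_3\mid7$ leaves only $u_3=7$ (as $u_3=1$ would give $u(7)=8$), and the reduction $c=7$ dies the same way against Fact 2.3.

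The main obstacle is $k=2$, which is where the one genuinely new index-$4$ family (entry (8)) appears. Here $u_2\equiv1\pmod2$ and $u_2\mid6$ force $u_2\in\{3,-3\}$ after discarding $u_2=-1$ (a $\{5,6\}$-cycle). The branch $u_2=3$ sends the orbit to $9$ and, via the reduction $c=3$ to $v\in N_{2,\cdot}$ with $v(2)=3$ and $v(5/3)=2$, is eliminated by Fact 2.3 against the two admissible $N_2$-forms. The delicate branch is $u_2=-3$, where $u(6)=3$ re-enters the already classified region and the orbit must be continued: the divisibility now reads $u_3\mid3$, so $u(3)=3+u_3\in\{0,2,4,6\}$. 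The values $u(3)\in\{4,6\}$ create cycles; and $u(3)=2$ forces $h(3)=-1$, hence $u(2)=3+6h(2)$, whereupon $u_4\mid2$ together with $u_4\equiv1\pmod6$ admits only $u(2)=3$, itself a $\{2,3\}$-cycle---so all of these are excluded, leaving exactly $u(3)=0$ and the orbit $\{4,5,6,3,0\}$. Finally Lemma 2.2 identifies the interpolant through $(3,0),(4,5),(5,6),(6,3)$ as the integer quadratic $-(2x-13)(x-3)$, so $u$ has precisely the form of entry (8). Collecting the eight surviving families (which have pairwise distinct associated sequences, so no overlaps occur) yields the stated list.
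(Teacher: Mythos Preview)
Your proof is correct and follows essentially the same architecture as the paper's: the same case split on $k\in\{0,1,2,3\}$ via Lemma~2.1, the same divisibility $u_k\mid(4+k)$, and the same reduction $v(x)=u(cx)/c$ to push the orbit into a previously classified $N_1$, $N_2$, or $N_3$.

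The differences are stylistic rather than substantive. First, you package the parity/congruence restrictions uniformly as $u_k\equiv 1\pmod{k!}$, whereas the paper argues them ad hoc (e.g.\ in its Case~3 it rules out even $u_2$ by noting $u(4)=5$ is odd while $u(0)\equiv u(6)\pmod 2$); your formulation is cleaner. Second, for $u_0=-2$ you invoke $u\in N_{2,m-1}$ directly and filter Theorem~3.2 by $u(4)=2$, while the paper instead reduces by $c=2$ to land in $N_2$ with $v(2)=1$; both routes give entry~(4) in one step. Third, in the delicate branch $k=2$, $u_2=-3$, the paper reduces by $c=3$ to a $v\in N_2$ with $v(2)=1$ and reads off $v(1)=0$ from Theorem~3.2(2), hence $u(3)=0$ immediately; you instead continue the orbit by hand, listing $u(3)\in\{0,2,4,6\}$ from $u_3\mid 3$ and eliminating the non-zero values one by one. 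Your route is a bit longer here---and in fact you could have shortened it by also using $u_2\mid u_3$, which forces $u_3\in\{\pm3\}$ outright and kills $u(3)\in\{2,4\}$ without the $h(3)=-1$ computation---but the argument you give is valid as written.
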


\begin{proof} Let $u$ be a non-zero polynomial in $N_{4,m}$, for some $m\in\mathbb{N}$. That means for any $k$ satisfying (2.3) one obtains by Lemma 2.1 that $0\le k\le C_4=3$. Also, let $u_i(4)$'s be as in (1.2).\\

\noindent{\tt Case 1.} Let $k=0$. That means $u(4)\neq 5$. Since $-4=\sum\limits_{i=0}^{m-1} u_i(4)\equiv 0 \pmod{u_0(4)}$, one has $u_0(4)|4$. i.e., $u_0(4)\in\{-1,\pm 2,\pm 4\}$, as $u_0(4)$ cannot be 1.

 If $u_0(4)=-1$, then $u(4)=3$, i.e., $u\in N_{3,m-1}$. It follows from the list in Theorem 3.4 that $u$ must be of one of the following forms:\begin{itemize}
    \item $u(x)=(x-3)p(x)$ for some polynomial $p(x)$ with $p(4)=3$ (see Theorem 3.4(1)).

    \item $u(x)=(x-1)+p(x)(x-1)(x-2)(x-3)(x-4)$ for some polynomial $p(x)$ (see Theorem 3.4(2)).
\end{itemize}

 If $u_0(4)=2$, then $u(4)=6$. Let $v$ be the reduction of $u$ from $4$ to $2$. Then $v(2)=3$ and $v\in N_{2,m}$. Looking at the list in Theorem 3.2 one can see that $v$ must be of one of the following forms:
\begin{itemize}
    \item $v(x)=-3x+9+p(x)(x-2)(x-3)$ for some polynomial $p(x)$. Then $u(x)=2v\left(\frac{x}{2}\right)=-3x+18+\frac{1}{2}p\left(\frac{x}{2}\right)(x-4)(x-6)$.

    \item $v(x)=-x^3+9x^2-25x+25+p(x)(x-2)(x-3)(x-4)(x-5)$ for some polynomial $p(x)$. However, then one has $u(x)=2v\left(\frac{x}{2}\right)=-\frac{1}{4}x^3+\frac{9}{2}x^2-25x+50+\frac{1}{8}p\left(\frac{x}{2}\right)(x-4)(x-6)(x-8)(x-10)$, which, by Lemma 2.2, cannot be a polynomial over $\mathbb{Z}$ for any choice of integer polynomial $p(x)$.
\end{itemize}

 If $u_0(4)=-2$, then $u(4)=2$. Let $v$ be the reduction of $u$ from $4$ to $2$. Then $v(2)=1$ and $v\in N_{2,m}.$ From Theorem 3.2 it follows that $v$ must be of the form $(x-1)+p(x)(x-1)(x-2)$ for some polynomial $p(x)$ (see Theorem 3.2(2)), and then $u(x)=2v\left(\frac{x}{2}\right)=x-2+\frac{1}{2}p\left(\frac{x}{2}\right)(x-2)(x-4)$.

 If $u_0(4)=4$, then $u(4)=8$. Let $v$ be the reduction of $u$ from $4$ to $1$. Then $v(1)=2$ and $v\in N_{1,m}.$ From the list in Fact 1.2 it follows that $v$ must be of one of the following forms:
\begin{itemize}
    \item $v(x)=-2x+4+p(x)(x-1)(x-2)$, i.e., $u(x)=-2x+16+\frac{1}{4}p\left(\frac{x}{4}\right)(x-4)(x-8)$ (see Fact 1.2(b)). 

    \item $v(x)=-2x^2+7x-3+p(x)(x-1)(x-2)(x-3)$, i.e., $u(x)=-\frac{1}{2}x^2+7x-12+\frac{1}{16}p\left(\frac{x}{4}\right)(x-4)(x-8)(x-12)$ (see Fact 1.2(c)). However, this cannot be a polynomial over $\mathbb{Z}$ for any choice of an integer polynomial $p(x)$, which follows from Lemma 2.2.
\end{itemize}

If $u_0(4)=-4$, then $u(4)=0$, i.e., $u(x)=(x-4)p(x)$, for some non-zero $p(x)$.\\

\noindent{\tt Case 2.} Let $k=1$. That means $u(4)=5,~u(5)\neq 6$. Then one has $u_1(4)|5$, i.e., $u_1(4)\in\{\pm 5\}$. (Note that $u_1(4)$ cannot be $\pm 1$, as that would imply $u(5)=4$, in which case it cannot be nilpotent, or $u(5)=6$, which is not possible in this case.)

 If $u_1(4)=-5$, then $u(5)=0$, i.e., $u(x)=(x-5)p(x)$, with $p(4)=-5$.

 If $u_1(4)=5$, then $u(5)=10$. Let $v$ be the reduction of $u$ from $5$ to $1$. Then $v(1)=2,v\left(\frac{4}{5}\right)=1$, and $v\in N_{1,m-1}.$ One can use list in Fact 1.2 and Fact 2.3 to check that (similar to what was done in Case 2 of Theorem 3.2) such a polynomial $v$ with integer coefficients cannot exist.\\

\noindent{\tt Case 3.} Let $k=2$. That means $u(4)=5,~u(5)=6$, but $u(6)\neq 7$. Then one has $u_2(4)|6$ $u_2(4)\neq \pm 1$, i.e., $u_2(4)\in\{\pm 2,\pm 3,\pm 6\}$. Also, $u_2(4)$ cannot be even, as that would imply $u(6)$ is even, i.e., $u(0)$ is even, and that cannot happen as $u(4)$ is odd. Thus $u_2(4)\not\in\{\pm 2,\pm 6\}$. That means $u_2(3)\in\{\pm 3\}$.

 If $u_2(4)=-3$, then $u(6)=3$. Let $v$ be the reduction of $u$ from $6$ to $2$. Then $v\left(\frac{4}{3}\right)=\frac{5}{3},v\left(\frac{5}{3}\right)=2, v(2)=1$, and $v\in N_{2,m-2}$. It follows from Theorem 3.2 that $v(1)=0$, i.e., $u(3)=0$. Thus $u(x)=-2x^2+19x-39+p(x)(x-3)(x-4)(x-5)(x-6),$ for some $p(x)\in\mathbb{Z}[x]$.

If $u_2(4)=3$, then $u(6)=9$. Let $v$ be the reduction of $u$ from $6$ to $2$. Then $v\left(\frac{4}{3}\right)=\frac{5}{3},v\left(\frac{5}{3}\right)=2, v(2)=3$ and $v\in N_{2,m-2}$. Using Theorem 3.2 and Fact 2.3 one can check that (similar to what was done in Case 2 of Theorem 3.2) no such $v$ with integer coefficients can exist.\\

\noindent{\tt Case 4.} Let $k=3$. That means $u(4)=5, u(5)=6, u(6)=7$, but $u(7)\neq 8$. Using similar arguments as in Case 3, we can reject all the possibilities that arise here. \end{proof}

The classification of the polynomials in $N_{-3}$ and $N_{-4}$ follows from Theorems 3.4, 3.5, and Fact 2.4. As we do not need to know such classifications for our next section that contains the main results we skip the part of writing them out.

\section{The Main Results}
We state and prove the main results of this paper in this section.

\subsection{Relation Between the Integers $r$ and $m$ in $N_{r,m}$ }

We start with the first main result.

\begin{theorem}
 Let $r$ be an integer greater than 3 and $u$ be an integer polynomial such that $u\in N_{r,m}$. Then $m\le r$, i.e., in other words, $N_r=\sqcup_{i=1}^{r} N_{r,i}$.
\end{theorem}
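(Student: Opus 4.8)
The plan is to argue by strong induction on $r$, with $r=4$ (Theorem 3.5, where the largest occurring index is $4$) as the base case, assuming the statement for all integers $r'$ with $4\le r'<r$. Write $a_i=u^{(i)}(r)$, so that $a_0=r$, $a_m=0$, and $u_i(r)=a_{i+1}-a_i$. Since $u_i(r)\mid u_j(r)$ for $i\le j$ and $\sum_{i=0}^{m-1}u_i(r)=a_m-a_0=-r$, the leading difference $u_0(r)$ divides $r$. I would split the proof according to the sign and size of $u_0(r)$.

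The two routine cases are $|u_0(r)|\ge2$ and $u_0(r)=-1$. If $d:=|u_0(r)|\ge2$, then $d\mid r$; as every $u_i(r)$ is a multiple of $u_0(r)$ we get $d\mid a_i$ for all $i$, and from $r\mid u(r)-u(0)$ also $d\mid u(0)$. Consequently $v(x):=u(dx)/d$ has integer coefficients, and $v^{(n)}(r/d)=u^{(n)}(r)/d$ shows $v\in N_{r/d,m}$ with $r/d<r$; the inductive hypothesis (or the explicit lists of Facts 1.2--1.4 and Theorem 3.2 when $r/d\le3$) gives $m\le\max(r/d,4)<r$. If $u_0(r)=-1$, then $a_1=r-1\ge4$ and $u\in N_{r-1,m-1}$, so induction gives $m-1\le r-1$, i.e. $m\le r$.

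The substantial case is $u_0(r)=+1$, where the orbit first climbs. Lemma 2.1 yields an ascending run $a_0=r,\dots,a_k=r+k$ with $k\le C_r$, after which $u_k(r)\mid(r+k)$ and $d':=|u_k(r)|\ge\max(2,\,k!-1)$ (indeed $u_k(r)\in\{0,\pm1\}$ would either repeat an orbit point or prolong the run). As before every post-run point and $u(0)$ are divisible by $d'$, so $v(x):=u(d'x)/d'\in\mathbb{Z}[x]$ belongs to $N_{w,\,m-k}$ with $w:=(r+k)/d'<r$ and $v(w)=w\pm1$. A direct computation shows that $r+k\le d'(r-k)$, equivalently $k+w\le r$, for every $r\ge5$ and every admissible $k\le C_r$; thus when $w\ge3$ the inductive hypothesis (or Theorems 3.4, 3.5 for $w\in\{3,4\}$) gives $m-k\le w$, whence $m\le k+w\le r$. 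The descending possibility $u_k(r)<0$ with $w\in\{1,2\}$ is just as painless, because then $v(w)=w-1$ and the classifications of $N_1,N_2$ force $m-k\le w$.

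The one genuine obstacle is the ascending possibility $u_k(r)>0$ with $w\in\{1,2\}$: here $v$ itself begins by climbing ($v(1)=2$ or $v(2)=3$), and the lists for $N_1,N_2$ allow $m-k$ to be as large as $3$ or $4$, so the crude estimate only delivers $m\le r+2$. To remove the surplus I would exploit that $v\in\mathbb{Z}[x]$ and feed the rational pre-run identities $v\big((r+i)/d'\big)=(r+i+1)/d'$, $0\le i<k$, into Lemma 2.2 together with the prime-support criterion (Fact 2.3): because $d'$ is large while the orbit points of $v$ are small, these conditions force the free polynomial in the description of $N_w$ to assume a value with a forbidden denominator, ruling out every configuration with $m>r$. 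This is exactly the recurring ``no such $v$ with integer coefficients exists'' step in the proofs of Theorems 3.2, 3.4, and 3.5, and performing it uniformly in $r$ rather than for one fixed small base value is where I expect the real work to lie.
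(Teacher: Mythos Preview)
Your plan is essentially the paper's proof: strong induction with base $r=4$, the same three-way split on $u_0(r)$, and in the climbing case the same reduction $v(x)=u(d'x)/d'$ after invoking Lemma~2.1. Your inequality $k+w\le r$ for the range $w\ge 3$ is exactly the estimate the paper uses in its Case~2 (written there at $r+1$).

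Where you diverge is in your assessment of the ascending $w\in\{1,2\}$ subcase, which you flag as ``where the real work lies'' and propose to handle by a uniform-in-$r$ application of Lemma~2.2 and Fact~2.3. The paper shows no such machinery is needed. For every $r\ge 6$ the crude bound $m\le k+4$, combined with $k\le C_r$ and the divisibility constraint $d'\mid r+k$, already gives $m\le r$ (since $C_r\le 3$ for $r\le 18$ and $C_r\ll r$ thereafter, while the few tight spots such as $r=6$, $k=3$ are excluded because $d'\mid 9$ with $d'\ge 5$ forces $w=1$). The only value that genuinely escapes is $r=5$, and there the single surviving configuration is $k=3$, $w=1$. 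The paper (its Case~1, phrased for $r+1=5$) dispatches it with a one-line parity check: $u(6)=7$ is odd, while $2\mid u(8)-u(6)$, so $u(8)$ is odd too, contradicting $u(8)\in\{0,4,12,16\}$ as forced by $u_3(5)\mid 8$. Thus the ``real work'' you anticipate collapses to a single ad~hoc congruence at one small value of $r$; your interpolation route would also succeed but is heavier than the situation demands.

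One minor sharpening the paper uses and you do not: every prime divisor of $d'=|u_k(r)|$ divides $1+k!\,g(0)$ and hence exceeds $k$, giving $d'>k$ rather than merely $d'\ge\max(2,k!-1)$; this is what makes the numerics in the general case close up without residue.
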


\begin{proof}
    We will use induction on $r$. The base case follows from Theorem 3.5. Let $r$ be an integer greater than or equal to 4, and assume that the statement is true for every integer $q$ satisfying $4\le q\le r$. We want to prove that the statement is true for $r+1$, i.e., if $u\in N_{r+1,m}$, then $r+1\ge m$. Then by Lemma 2.1, the $k$, for which (2.3) holds, must be between 0 and $C_{r+1}$, where $C_{r+1}$ is as in (1.1). Let $u_i(r+1)$'s be as in (1.2). Define \begin{align*}
        a&:=|u_k(r+1)| \text{ and}\\
        b&:=\frac{r+k+1}{a},
    \end{align*}
    so that $ab=r+k+1$.
    
    First suppose that $k=0$, i.e., $u(r+1)\neq r+2$. Then $-(r+1)=\sum\limits_{i=0}^{m-1} u_i(r+1)\equiv 0 \pmod {u_0(r+1)},$ i.e., $u_0(r+1)|r+1$. The $u_0(r+1)$, of course, cannot be $1$. If $u_0(r+1)=-1$, then $u(r+1)=r$, so that $u\in N_{r,m-1}$. Then by the induction hypothesis, we get $m-1\le r,$ i.e., $m\le r+1$. Now suppose that $u_0(r+1)$ is a non-trivial divisor of $r+1$. Define $v(x)=\frac{1}{a}u(ax)$. Then $v\in N_{b,m}$ and $b<r+1$. If $b\ge 4$, then $m\le b< r+1$. Otherwise $b\in\{1,2,3\}$. If $b=1$ then by Fact 1.2 one has $m\le 3<r+1$. Similarly, one treats the possibilities $b=2$, and $b=3$ using Theorems 3.2 and 3.4, respectively.

    Now, let us suppose that $1\le k\le C_{r+1}$. Then $$u(x)=(x+1)+(x-r-1)\cdots(x-r-k)g(x),$$ for some polynomial $g(x)$ with $g(r+k+1)\neq 0$ and $u_k(r+1)\neq \pm 1$, as otherwise $u(r+k+1)=r+k+2$ or $r+k$, which, in the first case, goes against the definition of $k$, and in the second case, gives a non-nilpotent polynomial at $r+1$. Also $-(r+k+1)=\sum\limits_{i=k}^{m-1} u_i(r+1)\equiv 0 \pmod {u_k(r+1)},$ i.e., $u_k(r+1)|r+k+1$. Let $p\in P(u_k(r+1))$. Then $p$ must also divide $r+k+1$ and $u(0)$. Note that 
    \begin{align*}
        a=|u_k(r+1)|&=|u(r+k+1)-(r+k+1)|\\
        &=|(r+k+2)+k!g(r+k+1)-(r+k+1)|\\
        &=|1+k!g(r+k+1)|,
    \end{align*}
    i.e., $p|1+k!g(r+k+1)$. Since $p|r+k+1$, one deduces that $p|1+k!g(0)$. So, in particular, one has $a>k$. Define $v(x)=\frac{1}{a}u(ax)$, then $v\in N_{b,m-k}$, with $b=\frac{r}{a}+\frac{k+1}{a}<r+1$. One immediately sees that if $r\ge 6$ then in particular, one has $k\le C_r \le \frac{r}{2}$. This will be useful in Case 2 below. \\

    \noindent{\tt Case 1.} Let $r=4$ and $1\le k\le C_{5}=3$. First suppose that $b=\frac{5+k}{|u_k(5)|}\in\{1,2,3\}$. Note that $k$ cannot be 3 here, as otherwise $|u_k(5)|\in\{4,8\}$ and that implies $u(5)=6,u(6)=7,u(7)=8,u(8)\in\{0,4,12,16\}$; however, $2|u(8)-u(6)\in \{-7,-3,5,9\}$, which is false. That means $k\in\{1,2\}$.  
    If $b=1$, then by Fact 1.2 applied to $v$ one obtains $m-k\le 3$, i.e., $m\le k+3\le 5=r+1$. Similarly, one treats the possibilities 
    $b=2$ and 3 using Theorems 3.2 and 3.4, respectively. 
    
    Now suppose that $4\le b $. Of course $b<r+1=5$, so that $b=4$. Then $k$ must be 3, i.e., $a=2$. That means $u(5)=6,u(6)=7,u(7)=8,u(8)=10$; however, that would mean $2|u(8)-u(6)=3$, which is false. So this cannot happen.\\

\noindent{\tt Case 2.} Let $r\ge 5$ and $1\le k\le C_{r+1}$.
   First suppose that $b\in\{1,2,3\}$. If $b=1$, then by Fact 1.2 applied to $v$ one has $m-k\le 3$, i.e., $m\le k+3\le r+1$. Similarly, one treats the possibilities 
    $b=2$ and $3$ using Theorems 3.2 and 3.4, respectively.

    Now suppose that $b\ge 4$. Then by the induction, one has $m-k\le b$, i.e., $m\le k+b\le \frac{r+1}{2}+\frac{r+k+1}{k+1}\le r+3/2$. Now $m$ being an integer it follows that $m\le r+1$. This concludes the proof. \end{proof}

The next corollary is immediate from Facts 1.2, 1.3, 1.4, 2.4, Theorem 3.2, and Corollary 3.3.

\begin{corollary}
    If $r$ is an integer with $|r|\ge 3$, and and $u$ be an integer polynomial such that $u\in N_{r,m}$ for some $m$, then one must have $m\le |r|$, and if $|r|\le 2$ then $m\le |r|+2$.
\end{corollary}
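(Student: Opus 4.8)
The plan is to reduce the whole statement to the case $r\ge 0$ by means of the sign-flip symmetry of Fact 2.4, and then to read off the sharp index bound for each remaining value of $r$ from the classifications already in hand.

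First I would use Fact 2.4. The assignment $u(x)\mapsto -u(-x)$ is a bijection from $N_{r,i}^d$ onto $N_{-r,i}^d$ that fixes both the degree $d$ and the nilpotency index $i$. In particular the set of indices $i$ for which $N_{r,i}\neq\varnothing$ coincides with the set for which $N_{-r,i}\neq\varnothing$, so the largest admissible index is the same for $r$ and for $-r$. Since $|r|=|-r|$, it therefore suffices to verify both inequalities for $r\ge 0$; the values $r<0$ then follow at once.

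Next I would run through the finitely many non-negative values of $r$ and quote the relevant classification in each case. For $r=0$, Fact 1.4 produces only indices $1$ and $2$, so $m\le 2=|r|+2$; for $r=1$, Fact 1.2 lists indices $1,2,3$, giving $m\le 3=|r|+2$; for $r=2$, Theorem 3.2 lists indices up to $4$, giving $m\le 4=|r|+2$. For $r=3$, the list in Theorem 3.4 has maximal index $3$, so $m\le 3=|r|$, and for every $r\ge 4$, Theorem 4.1 gives $m\le r=|r|$ directly. Assembling these yields $m\le |r|$ whenever $|r|\ge 3$ and $m\le |r|+2$ whenever $|r|\le 2$, which, after transporting back to negative $r$ by the first step, is exactly the claim.

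There is no genuine obstacle in this argument: the corollary is pure bookkeeping, with all of the real content residing in the inductive proof of Theorem 4.1 and in the explicit classifications of $N_0,\dots,N_4$. The only point that requires a moment of care is the index-preservation in Fact 2.4, since it is precisely this that licenses handling negative $r$ without a separate induction; because that fact records the bijection $N_{r,i}^d\leftrightarrow N_{-r,i}^d$ with the index $i$ held fixed, the reduction to $r\ge 0$ is immediate.
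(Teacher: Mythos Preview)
Your argument is correct and follows exactly the route the paper takes: the corollary is placed immediately after Theorem 4.1 and is stated as ``immediate from Facts 1.2, 1.3, 1.4, 2.4, Theorem 3.2, and Corollary 3.3,'' i.e., reduce to $r\ge 0$ via Fact 2.4 and read off the index bound from the classifications and from Theorem 4.1. If anything, your write-up is more complete than the paper's one-line justification, since you explicitly invoke Theorem 3.4 for $r=3$ and Theorem 4.1 for $r\ge 4$, which the paper's list omits (Theorem 4.1 being implicit from placement).
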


Now we prove our second main result of this paper.
    
\begin{theorem}
    If $r$ is an positive integer greater than 3 and $u\in N_{r,r}$, then one of the following is true:
    \begin{enumerate}
        \item[(i)] $r=4$, and either $u(x)$ is of the form $(x-1)+(x-1)\cdots(x-r)p(x)$, or of the form $-2x^2+19x-39+p(x)(x-3)(x-4)(x-5)(x-6)$;

        \item[(ii)] $r\ge 5$, and $u(x)$ is of the form $(x-1)+(x-1)\cdots(x-r)p(x)$;
    \end{enumerate} for some $p(x)\in \mathbb{Z}[x]$.
\end{theorem}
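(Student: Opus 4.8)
The plan is to argue by induction on $r$, with $r=4$ (Theorem 3.5) as the base case: items (7) and (8) there are exactly the two forms in (i), since $-(2x-13)(x-3)=-2x^2+19x-39$. For the inductive step I take $r\ge 5$ and assume (ii) holds for all starting values in $\{5,\dots,r-1\}$ (reading $r-1=4$ off from (i)). Let $u\in N_{r,r}$. By Lemma 2.1 there is an integer $k$ with $0\le k\le C_r$ such that $u(r)=r+1,\dots,u(r+k-1)=r+k$ while $u(r+k)\ne r+k+1$, and I split the argument on the value of $k$.

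The case $k=0$ is where the descending form is produced. Here $u_0(r)\mid r$ and $u_0(r)\ne 1$. If $u_0(r)=-1$ then $u(r)=r-1$, so the tail orbit shows $u\in N_{r-1,r-1}$; the induction hypothesis (or the base case when $r=5$) gives that $u$ is the descending polynomial at $r-1$, i.e.\ $u(x)=(x-1)+(x-1)\cdots(x-(r-1))q(x)$, and imposing the extra condition $u(r)=r-1$ yields $(r-1)!\,q(r)=0$, hence $q(r)=0$ and $q(x)=(x-r)\tilde q(x)$; this upgrades $u$ to the descending form $(x-1)+(x-1)\cdots(x-r)\tilde q(x)$ at $r$, exactly as claimed. (For $r=5$ one also checks that the second form of (i) cannot have $u(5)=4$, since it forces $u(5)=6$.) If instead $|u_0(r)|=a\ge 2$, then $a\mid u(0)$, so $v(x):=\tfrac1a u(ax)\in\mathbb Z[x]$ and $v\in N_{b,r}$ with $b=r/a\le r/2<r$; but by Corollary 4.2 the index at $b$ is at most $r-1$ (at most $b$ if $b\ge 3$, at most $b+2\le 4$ if $b\le 2$), contradicting the index being $r$.

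For $1\le k\le C_r$ I use the same rescaling. Writing $a=|u_k(r)|$ one has $a\mid r+k$, $a\ne 1$, and (as in the proof of Theorem 4.1) every prime divisor of $a$ exceeds $k$, so $a>k$, i.e.\ $a\ge k+1$. Then $v(x):=\tfrac1a u(ax)\in N_{b,r-k}$ with $b=(r+k)/a$. Applying Corollary 4.2 to $v$: if $b\ge 3$ then $r-k\le b=(r+k)/a\le (r+k)/(k+1)$, which simplifies to $r\le k+2$; since $k\le C_r$ and $C_r\le r-3$ for every $r\ge 6$ (with the boundary $r=5$ forcing $k=3$, which is incompatible with $b\ge 3$ because $a\ge 4$ gives $b\le 2$), this case is impossible. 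If $b\le 2$ then $r-k\le b+2\le 4$, i.e.\ $k\ge r-4$, so the constraint $k\le C_r=3$ (for $5\le r\le 18$) leaves only $r\in\{5,6,7\}$, and larger $r$ are excluded since $C_r$ grows far slower than $r-4$.

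The main obstacle is the finite residue: ruling out every configuration with $1\le k$ and $b\le 2$ for $r\in\{5,6,7\}$, since any surviving polynomial there would begin with a genuine ascending run and thereby contradict (ii). I would dispatch these exactly as in the proofs of Theorems 3.2--3.5: the rescaled $v$ lies in $N_1$ or $N_2$, whose members are listed explicitly (Fact 1.2, Theorem 3.2), and forcing $v$ to take the prescribed rational values $v\big(\tfrac{r+j}{a}\big)=\tfrac{r+j+1}{a}$ on the rescaled initial run pins down the orbit; a parity/divisibility obstruction or, where that fails, the interpolation criterion of Lemma 2.2 then shows that no integer polynomial realizes the orbit. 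I expect Fact 2.3 alone to be insufficient for these leftover cases (for instance at $b=1$ it turns out to be automatically satisfied), so Lemma 2.2 is the decisive tool. Once these configurations are eliminated, only the $k=0$, $u_0(r)=-1$ branch survives for $r\ge 5$, yielding precisely the descending form $(x-1)+(x-1)\cdots(x-r)p(x)$, and the induction closes.
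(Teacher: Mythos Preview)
Your proposal is correct and follows essentially the same approach as the paper: induction on $r$ with base case $r=4$ from Theorem~3.5, the same rescaling $v(x)=\tfrac1a u(ax)$, the same dichotomy on $k$ and on the size of $b$, and the same reduction of the leftover cases to a finite check handled via interpolation (Lemma~2.2). Your treatment is in one place more careful than the paper's---you explicitly verify that the second form in (i) cannot satisfy $u(5)=4$ when invoking the hypothesis at $r-1=4$, a point the paper glosses over---and your enumeration of the residual configurations $(r,k)$ with $b\le 2$ matches the paper's once one translates between its $r+1$ indexing and yours.
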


\begin{proof}
    We will use induction on $r$, and use the method of proof of Theorem 4.1. The base case follows from Theorem 3.5(7) and 3.5(8). Let $r$ be a positive integer greater than or equal to 4, and assume that the statement is true for every integer $q$ with $4\le q\le r$. Also suppose that $u\in N_{r+1,r+1}$. Let $u_i(r+1)$'s be as in (1.2), $k$ be an integer satisfying (2.3) for the given $u$ and $r+1$, and $a,b$ be as in the proof of Theorem 4.1. Suppose that $1\le k\le C_{r+1}$. Then one can define $v$ as in the third paragraph of the proof of Theorem 4.1, and deduce that $v\in N_{b,r+1-k}$. We also get that $a=|u_k(r+1)|>k$, and that $k\le C_r\le r/2$ when $r\ge 6$. First we claim that $k=0$. For a contradiction suppose that $k>0$. Like Theorem 4.1 we look at the following two cases.\\

    \noindent{\texttt{Case 1.}} Let $r=4$ and $1\le k\le C_5= 3$. Then $$b=\frac{r+1+k}{|u_k(r+1)|}=\frac{5+k}{|u_k(5)|}\le \frac{5+k}{1+k}=1+\frac{4}{k+1}\in\left\{2,\frac{7}{3},3\right\},$$ so that $b\in\{1,2,3\}$. One deduces that $k\neq 3$ by using similar arguments as in Case 1 of Theorem 4.1. 
    
    If $k=1$, then $u(5)=6, u(6)\neq 7$ and $b=\frac{6}{|u_1(5)|}\le \frac{6}{2}=3$, i.e., $u(6)\in\{0,3,4,8,9,12\}$. Note that $u(6)=0$ is not possible as $u\in N_{5,5}$, and $u(6)=12$ is also not possible as then $v$ must lie in $N_{1,4}$, which is an empty set by Fact 1.2. If $u(6)=3$, then $v\in N_{2,4}$. It follows from Theorem 3.2(6) that $v$ has the associated finite sequence $\{2,3,4,5,0\}$, so that $3=u(6)=3v(2)=9$, which is false. Similarly, if $u(6)=9$ then $v\in N_{2,4}$, and $u(9)=3v(3)=12$. However that is not possible as otherwise one has $4=9-5|u(9)-u(5)=12-6=6$. If $u(6)=4$ then $v\in N_{3,4}$, which is absurd as $N_{3,4}$ is an empty set by Theorem 3.4. Similar arguments can be used if $u(6)=8$. 
    
    If $k=2$, then $u(5)=6,u(6)=7, u(7)\neq 8$, and $b=\frac{7}{|u_2(5)|}\le\frac{7}{3}$, and so $u(7)\in\{0,14\}$. Of course $u(7)$ cannot be zero as $u\in N_{5,5}$. Also, if $u(7)=14$ then $v\in N_{1,4}$, which is an empty set by Fact 1.2.\\

    \noindent{\texttt{Case 2.}} Let $r\ge 5$ and $1\le k\le C_{r+1}$.

    If $b=1$, then by Fact 1.2 applied to $v$ one has $r+1-k\le 3$, i.e., $r-k\le 2$. This can only be possible if $r=5$ and $k=C_6=3$, i.e., $u(6)=7,u(7)=8,u(8)=9,u(9)\neq 10$. As $u(9)-9$ divides $9$, $u(9)\in\{0,12,18\}$. One can now check that the interpolation polynomial for each of these possibilities are outside of $\mathbb Z[x]$. The rest follows from Lemma 2.2.

    If $b=2$, then by Theorem 3.2 applied to $v$ one has $r+1-k\le 4$, i.e., $r-k\le 3$. There are only three ways that this can be true, and they are $(r,k)\in\{(5,2),(5,3),(6,3)\}$. Using similar arguments as in the previous paragraph one can reject all these three possibilities.

    Finally, if $b\ge 3$ then by Corollary 4.2 applied on $v$ one obtains
    \begin{align*}
      & r+1-k\le     b \\
      \implies & r+1 \le k+b\le \frac{r+1}{2}+\frac{r}{k+1}+1 \text{ (as } a\ge k+1\text{)}\\
      \implies & \frac{r}{2}\le \frac{1}{2}+\frac{r}{k+1}\\
      \implies & r\le \frac{k+1}{k-1}\le 3~\text{, a contradiction}.
    \end{align*}
    Thus one must have $k=0.$ The $u_0(r+1)$ cannot be $1$. If $u_0(r+1)=-1$, then $u(r+1)=r$, i.e., $u\in N_{r,r}$. So by the induction hypothesis $$u(x)=(x-1)+(x-1)\cdots (x-r)p(x),$$ with $p(r+1)=0$, and that means $p(x)$ has the linear factor $x-r-1$. Now suppose $a=|u_0(r+1)|>1$. Then one defines $v$ as in the second paragraph of the proof of Theorem 4.1, and deduce that $v\in N_{b,r+1}$. Then Corollary 4.2 applied to $v$ implies that $r+1\le b+2,$ i.e., $r\le \frac{a+1}{a-1}\le 3$, which is false, as $r\ge 4$. This completes the proof. \end{proof}

From Theorem 4.3 and Fact 2.4 it follows that
\begin{corollary}
    If $r$ is a negative integer less than $-3$ and $u\in N_{r,|r|}$, then one of the following is true:
    \begin{enumerate}
        \item[(i)] $r=-4$, and either $u(x)$ is of the form $(x+1)+(x+1)\cdots(x+r)p(x)$, or of the form $2x^2+19x+39+p(x)(x+3)(x+4)(x+5)(x+6)$;

        \item[(ii)] $|r|\ge 5$, and $u(x)$ is of the form $(x+1)+(x+1)\cdots(x+r)p(x)$;
    \end{enumerate} for some $p(x)\in \mathbb{Z}[x]$.
\end{corollary}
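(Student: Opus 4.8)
The plan is to deduce this corollary from the positive-index classification in Theorem 4.3 by invoking the reflection symmetry recorded in Fact 2.4, exactly as the sentence preceding the statement advertises. The whole point is that the map $v(x):=-u(-x)$ is an index- and degree-preserving involution between nilpotent polynomials at $r$ and at $-r$, so the negative case carries no genuinely new content beyond a careful change of variables.

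First I would set $s:=|r|=-r$, which is a positive integer with $s>3$ since $r<-3$, and observe that the hypothesis reads $u\in N_{-s,s}^{\,d}$ for some degree $d$. Applying Fact 2.4 with its ``$r$'' taken to be $-s$, I get that $v(x):=-u(-x)$ lies in $N_{-(-s),s}^{\,d}=N_{s,s}^{\,d}$. Since $s>3$, Theorem 4.3 now applies to $v$ (with its ``$r$'' equal to $s$): either $s=5$ or more and $v(x)=(x-1)+(x-1)(x-2)\cdots(x-s)q(x)$, or $s=4$ and $v$ is of this same form or else $v(x)=-2x^2+19x-39+q(x)(x-3)(x-4)(x-5)(x-6)$, for some $q(x)\in\mathbb{Z}[x]$.

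The remaining step is to translate each form back to $u$ via $u(x)=-v(-x)$, which follows from the definition of $v$ by replacing $x$ with $-x$. For the generic form I would compute $u(x)=-v(-x)=(x+1)+(-1)^{s+1}q(-x)\,(x+1)(x+2)\cdots(x+s)$, where the product picks up the factor $(-1)^{s}$ from $\prod_{j=1}^{s}(-x-j)=(-1)^s\prod_{j=1}^{s}(x+j)$. Setting $p(x):=(-1)^{s+1}q(-x)\in\mathbb{Z}[x]$ yields the asserted form $(x+1)+(x+1)\cdots(x+r)p(x)$ (the product of consecutive factors with roots at $-1,\dots,-s$). For the exceptional quadratic in the $s=4$ case, the even degree makes the quartic product reflect to $(x+3)(x+4)(x+5)(x+6)$, and $-v(-x)=2x^2+19x+39-p(-x)(x+3)(x+4)(x+5)(x+6)$; absorbing the sign into a new integer polynomial $p$ gives precisely the second alternative of (i). This reproduces cases (i) and (ii) verbatim, completing the argument.

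There is no real obstacle here; the only point demanding care is bookkeeping of signs, namely the factor $(-1)^{s}$ coming from reflecting the degree-$s$ product and the outer sign in $u(x)=-v(-x)$, together with the routine check that $(-1)^{s+1}q(-x)$ (respectively $-p(-x)$) is again an integer polynomial so that the structural form is preserved. It is also worth noting explicitly that Fact 2.4 preserves the nilpotency index, which is what guarantees $v\in N_{s,s}$ rather than merely $v\in N_s$, so that Theorem 4.3 (an index-$r$ statement) is genuinely applicable.
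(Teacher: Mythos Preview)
Your proposal is correct and follows exactly the approach the paper indicates: the paper's ``proof'' of this corollary is merely the one-line remark ``From Theorem 4.3 and Fact 2.4 it follows that,'' and you have correctly supplied the routine sign bookkeeping behind that remark. The only minor slip is a reuse of the letter $p$ where you had been using $q$ in the exceptional $s=4$ computation, but the intent is clear and the argument is sound.
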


It is clear that considering all possible nilpotent polynomials at $r$ of index $n$ is the same as considering all possible finite nilpotent integer polynomial sequences (see Definition 4.5 below) $(r_0,r_1,\ldots,r_{n-1},0)$, where $r=r_0$ and $r_n=0$. This follows from Lemma 2.2 of the dissertation, and also because of the fact that if $L(x)$ is the Interpolation polynomial of degree less than or equal to $n-1$ with $L(r_{i-1})=r_i$, then all possible nilpotent polynomials at $r$ of index $n$ must be of the form $$L(x)+p(x)x(x-r)(x-r_1)\cdots(x-r_{n-1}).$$  

In that spirit, we dedicate the next section to the study of such sequences.

\subsection{Recurringly Nilpotent Polynomials over $\mathbb{Z}$}

We first recall some definitions from the introduction.

\begin{definition}
    We say that an integer sequence $\{r_n\}_{n\ge 0}$ is a \textit{(recurringly) nilpotent sequence} if $r_n=0$ for (infinitely many) some $n$'s.
\end{definition}

\begin{definition}
    Let $\{r_n\}_{n\ge 0}$ be an integer sequence. We say that $\{r_n\}_{n\ge 0}$ is a polynomial sequence if it has a \textit{generating integer polynomial} $u$, i.e., there is a polynomial $u(x)\in\mathbb{Z}[x]$ such that $u^{(n)}(r_0)=r_n$ for each $n\in \mathbb{N}$. In this case we also say that $u$ satisfies $\{r_n\}_{n\ge 0}$.
\end{definition}

\noindent{\textbf{Remark.}} In \cite{MN06} the authors use the term \textit{sequence realized by a polynomial} instead of a \textit{generating polynomial} of a sequence.

\noindent{\textbf{Remark.}} If $\{r_n\}_{n\ge 0}$ is a recurringly nilpotent integer polynomial sequence having a generating integer polynomial $u$, then the orbit of $u$ at $r_0$ is finite. Thus, in this case $r_0$ is a pre-periodic point of $u$ and $0$ is a periodic point of $u$.

We now state and prove a theorem that describes what recurringly nilpotent polynomial sequences look like.

\begin{theorem}
    Let $\{r_n\}_{n\ge 0}$ be a polynomial sequence, and $u$ be an integer polynomial that satisfies $\{r_n\}_{n\ge 0}$. Then the following are equivalent:
    \begin{enumerate}
        \item[(a)] The sequence $\{r_n\}_{n\ge 0}$ is a recurringly nilpotent sequence, i.e., $r_0$ is a pre-periodic point of $u$ (or $0$ is a periodic point of $u$).

        \item[(b)] There exists a positive integer $m$ such that either $r_m=r_{m+1}=0$, or $r_m=r_{m+2}=0$.
    \end{enumerate}
\end{theorem}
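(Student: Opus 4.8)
The plan is to prove the equivalence in Theorem 4.8 by showing the two implications, with the content living entirely in (a) $\Rightarrow$ (b), since (b) $\Rightarrow$ (a) is immediate. For the easy direction: if there is a positive integer $m$ with $r_m=r_{m+1}=0$, then $u(0)=u(r_m)=r_{m+1}=0$, so $0$ is a fixed point of $u$ and hence periodic; similarly if $r_m=r_{m+2}=0$ then $u^{(2)}(0)=r_{m+2}=0$, so $0$ is a periodic point of period dividing $2$. In either case the orbit of $r_0$ eventually reaches $0$ and cycles, so $\{r_n\}$ is recurringly nilpotent. This direction requires only unwinding the definition of a generating polynomial.

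For the substantive direction (a) $\Rightarrow$ (b), I would start from the hypothesis that $0$ is a periodic point of $u$, say of exact period $\ell$, so that $u^{(\ell)}(0)=0$ and $0,u(0),u^{(2)}(0),\ldots,u^{(\ell-1)}(0)$ is a genuine cycle. The goal is to force $\ell\in\{1,2\}$ and then translate this back into a statement about the sequence $\{r_n\}$. The key tool is the divisibility structure already exploited throughout the paper: for distinct integers $a,b$ we have $(a-b)\mid (u(a)-u(b))$. Applying this around the cycle $c_0=0,c_1=u(0),\ldots,c_{\ell-1},c_\ell=c_0$, one obtains a chain of divisibilities $(c_i-c_{i-1})\mid(c_{i+1}-c_i)$ running cyclically, which (because the consecutive differences divide one another in a cycle) forces all the consecutive differences $|c_{i+1}-c_i|$ to be equal in absolute value. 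This is the standard argument that integer polynomial cycles have length at most $2$ (the same phenomenon underlying the work of Narkiewicz cited in the introduction): a cycle of length $\ell\ge 3$ would require the equal-magnitude consecutive differences to sum to zero around the cycle while never vanishing, which is impossible for $\ell\ge 3$.

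Having established that the period $\ell$ of $0$ is either $1$ or $2$, I would finish by choosing $m$ to be the smallest index at which the orbit of $r_0$ has entered the cycle containing $0$ and equals $0$; such an $m$ exists because the orbit is finite (pre-periodic) and $0$ lies on the eventual cycle. If $\ell=1$ then $u(0)=0$, so $r_m=0$ gives $r_{m+1}=u(r_m)=u(0)=0$, yielding the first alternative in (b). If $\ell=2$ then $u^{(2)}(0)=0$, so $r_m=0$ gives $r_{m+2}=u^{(2)}(0)=0$, yielding the second alternative.

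The main obstacle I anticipate is the cycle-length argument itself: one must carefully justify that the cyclic chain of divisibilities $(c_{i+1}-c_i)\mid(c_{i+2}-c_{i+1})$ forces equality of absolute values of all consecutive differences, and then rule out $\ell\ge 3$. The subtle point is that $a\mid b$ and $b\mid a$ give $|a|=|b|$, but closing the loop requires tracking the full cycle; the clean way is to observe that $d_0\mid d_1\mid\cdots\mid d_{\ell-1}\mid d_0$ (where $d_i=c_{i+1}-c_i$), whence all $|d_i|$ coincide, and then note $\sum_{i=0}^{\ell-1} d_i=0$ with all $|d_i|$ equal and nonzero forces the signs to balance, which is impossible when $\ell$ is odd and, when $\ell$ is even and at least $4$, contradicts the injectivity of the cycle (distinct $c_i$) under equal-magnitude steps. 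I would make sure to handle the degenerate possibility that some $c_i$ coincide (which cannot happen for an exact cycle) and to keep the orbit-finiteness remark from Section 4.2 explicit when selecting $m$.
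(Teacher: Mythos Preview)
Your proposal is correct and yields a valid proof, but it takes a genuinely different route from the paper's. The paper's own argument for (a)$\Rightarrow$(b) is a two-line appeal to Fact~1.4 (the classification of $N_0$ from \cite{SS23}): choosing the two smallest positive indices $m<q$ with $r_m=r_q=0$, one has $u^{(q-m)}(0)=0$, so $u\in N_0$, and Fact~1.4 forces the nilpotency index at $0$ to be $1$ or $2$, whence $q\in\{m+1,m+2\}$. Your approach instead reproves the needed consequence from scratch, via the classical Northcott--Narkiewicz divisibility argument that an integer polynomial has no cycles of length $\ge 3$. The paper's route is shorter given the machinery already in place; yours is self-contained and does not depend on the earlier paper.

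One small tightening: your treatment of the case $\ell\ge 3$ splits into odd and even and is a bit hand-wavy for even $\ell\ge 4$ (``contradicts injectivity under equal-magnitude steps'' is true but not yet an argument). A cleaner uniform finish, once you know all $|d_i|=d>0$, is: if every consecutive pair $d_i,d_{i+1}$ has the same sign then all $d_i$ share a sign and $\sum_i d_i=\pm\ell d\neq 0$, contradicting $\sum_i d_i=0$; hence some $d_i,d_{i+1}$ have opposite signs, giving $c_{i+2}=c_i$, which for $\ell\ge 3$ contradicts the distinctness of the cycle points. Also, since the statement asks for a \emph{positive} integer $m$, be explicit that you take $m$ to be the least \emph{positive} index with $r_m=0$ (this matters when $r_0=0$); your conclusion then goes through exactly as you wrote.
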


\begin{proof}
    The fact that $(b)$ implies $(a)$ is clear. Therefore we only prove that $(a)$ implies $(b)$. We take the two smallest positive integers $m\text{ and }q$ with $m<q$, such that $r_m=r_q=0$. That means $u$ is nilpotent at $0$. It follows from Fact 1.4 that $q\in\{m+1,m+2\}$. If $q=m+1$, then $\{r_n\}_{n\ge 0}$ is of the form
    \begin{equation}
        r_0,\ldots,r_{m-1},0,0,\ldots 
    \end{equation}
    and one has $r_n=u^{(n)}(r_0)=0$ for every $n\ge m$, and if $q=m+2$, then $\{r_n\}_{n\ge 0}$ is of the form 
    \begin{equation}
        r_0,\ldots,r_{m-1},0,r_{m+1},0,\ldots
    \end{equation}
    and one has $r_n=u^{(n)}(r_0)=0$ for every $n\in\{m+2k~|~k\ge 0\}$.
\end{proof}

\begin{remark}
   One can see that if $\{r_n\}_{n\ge 0}$ is an polynomial sequence satisfying the conditions of Theorem 4.7 then, in particular, $\{r_n\}_{n\ge 0}$ is a bounded sequence.
\end{remark}

From Theorem 4.7 it follows that all the recurringly nilpotent polynomial sequences must be of one of the forms (4.4) or (4.5). We would like to classify all such polynomial sequences. The next two theorems are dedicated to that.

\begin{theorem}
    Let $m$ be a non-negative positive integer, and $\{r_n\}_{n\ge 0}$ be a recurringly nilpotent sequence of the form $(4.4)$. We assume that $(4.4)$ is the zero sequence if $m=0$. When $m\ge 1$, we suppose that $0\not\in\{r_0,\ldots,r_{m-1}\}$.
    Also suppose that $u$ is a generating polynomial of $\{r_n\}_{n\ge 0}$. Then exactly one of the following holds:
    \begin{enumerate}
        \item[(1)] $m=0$, and $\{r_n\}_{n\ge 0}$ is the zero sequence.
    
        \item[(2)] $m=1$, and then $r_0$ is arbitrary; and

        \item[(3)] $m=2$, and then one has $r_0|2$, and $r_1=2r_0$.
    \end{enumerate}
\end{theorem}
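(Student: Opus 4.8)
The plan is to exploit the recurrence $r_m=r_{m+1}=0$, which forces $u(0)=u(r_m)=r_{m+1}=0$, so that $u(x)=x\,g(x)$ for some $g\in\mathbb{Z}[x]$; equivalently $a\mid u(a)$ for every integer $a$. The cases $m=0$ and $m=1$ are then immediate: $m=0$ is the zero sequence, giving (1), and for $m=1$ any value of $r_0$ occurs since $u(x)=x(x-r_0)$ realizes $r_0,0,0,\ldots$, giving (2). So I would assume $m\ge 2$ and aim to show the sequence is rigidly determined.

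First I would extract two divisibility chains. From $a\mid u(a)$ applied to each iterate, $r_i\mid r_{i+1}$ for $0\le i\le m-1$. From the paper's relation $u_i(r_0)\mid u_j(r_0)$ for $i<j$, the consecutive differences $d_i:=r_{i+1}-r_i$ satisfy $d_0\mid d_1\mid\cdots\mid d_{m-1}$, and each $d_i$ with $i<m$ is nonzero (a vanishing difference would make the orbit eventually constant and unable to reach $0$). Combining the two chains, since $r_i\mid r_{i+1}$ and $r_i\mid r_i$ we get $r_i\mid d_i$, so $d_i=r_i t_i$ with $t_i\in\mathbb{Z}\setminus\{0\}$ and $r_{i+1}=r_i(1+t_i)$. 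The difference chain then yields $t_i\mid (1+t_i)t_{i+1}$, and as $\gcd(t_i,1+t_i)=1$ this gives $t_i\mid t_{i+1}$. Because $r_m=0=r_{m-1}(1+t_{m-1})$ with $r_{m-1}\ne 0$ we have $t_{m-1}=-1$, and a downward induction (using that $t_i=-1$ would force $r_{i+1}=0$) shows $t_i=1$ for all $0\le i\le m-2$. Hence $r_{i+1}=2r_i$, i.e. $r_i=2^i r_0$ for $0\le i\le m-1$; in particular $r_1=2r_0$.

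The remaining task is to pin down $r_0$ via the integrality of $u$, for which Lemma 2.2 is the right tool. For $m=2$ the orbit is $r_0\to 2r_0\to 0\to 0$; the degree-$\le 2$ interpolation polynomial through $(r_0,2r_0),(2r_0,0),(0,0)$ is $-\tfrac{2}{r_0}x(x-2r_0)$, which lies in $\mathbb{Z}[x]$ precisely when $r_0\mid 2$. By Lemma 2.2, if $r_0\nmid 2$ no integer polynomial can realize the orbit, so $r_0\mid 2$ is forced, giving (3). For $m\ge 3$ I would apply the same lemma to the three consecutive iterates $2^{m-3}r_0\to 2^{m-2}r_0\to 2^{m-1}r_0\to 0$: setting $B=2^{m-3}r_0\ne 0$, the degree-$\le 2$ interpolant through $(B,2B),(2B,4B),(4B,0)$ has leading coefficient $-\tfrac{4}{3B}$, whose denominator carries a factor of $3$ that the numerator $4$ cannot cancel, so it is never an integer. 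Thus no integer polynomial realizes the orbit and the case $m\ge 3$ is impossible; combined with the above this proves exactly one of (1)--(3) holds.

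I expect the main obstacle to be the rigidity step, namely forcing $r_i=2^i r_0$: it requires simultaneously wielding both divisibility chains (the $r_i\mid r_{i+1}$ chain and the difference chain $d_i\mid d_{i+1}$) and organizing them through the substitution $d_i=r_i t_i$, so that the coprimality $\gcd(t_i,1+t_i)=1$ collapses the $t_i$ to $1$. Once this geometric-doubling structure is in hand, the two applications of Lemma 2.2 are short leading-coefficient computations; the only care needed there is to choose, for $m\ge 3$, a triple of consecutive iterates lying in the doubling regime followed by the drop to $0$, so that the factor of $3$ in the denominator appears and produces the contradiction uniformly in $m$.
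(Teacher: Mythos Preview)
Your proof is correct, but it follows a genuinely different route from the paper's. The paper observes (as you do) that $u(x)=xp(x)$, but then immediately rescales: setting $v(x)=\tfrac{1}{r_0}u(r_0x)=x\,p(r_0x)\in\mathbb{Z}[x]$, it obtains a polynomial with $v(0)=0$ that is nilpotent at $1$ of index $m$, and then simply reads off the answer from the already-established classification of $N_1$ (Fact~1.2). The constraint $v(0)=0$ kills the index-$3$ family in Fact~1.2(c) (since $-3-6p(0)=0$ has no integer solution), forcing $m\le 2$, and the index-$2$ family yields $r_1=2r_0$ together with $P(0)=-2/r_0\in\mathbb{Z}$, i.e.\ $r_0\mid 2$.

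By contrast, you bypass Fact~1.2 entirely and argue from first principles: the two divisibility chains $r_i\mid r_{i+1}$ and $d_i\mid d_{i+1}$, organized through the substitution $d_i=r_it_i$, collapse to $t_i=1$ and hence $r_i=2^ir_0$; then two short Lagrange-interpolation computations via Lemma~2.2 handle $m=2$ and $m\ge 3$. Your approach is more self-contained (it does not rely on the prior classification of $N_1$) and makes the doubling structure of the orbit completely explicit, at the cost of a slightly longer argument. The paper's approach is shorter precisely because it cashes in the work already done for $N_1$; this is consistent with the paper's overall inductive strategy of reducing questions at $r$ to smaller base points via the scaling $v(x)=\tfrac{1}{a}u(ax)$.
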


\begin{proof}
It is enough to consider the case $m\ge 1$. The given conditions on $u$, in particular, implies that $u\in N_{0,1}$. Then by Fact 1.4(a) one obtains $u(x)=xp(x)$, for some non-zero polynomial $p$, so, in particular, $r_0|r_1|\cdots |r_{m-1}$. Define $v(x)=\frac{1}{r_0}u(r_0 x)$. Then $v$ is nilpotent at $1$ of index $m$ with $v(0)=0$. From Fact 1.2 it follows that $v$ must be of one of the following forms:
    \begin{enumerate}
        \item[(i)] $v(x)=x(x-1)Q(x)$ for some non-zero polynomial $Q$. In this case $m=1$, i.e., the above sequence is $r_0,0,0,\ldots$, and $u(x)=x(x-r_0)P(x)$, where $P(x)=\frac{1}{r_0}u\left(\frac{x}{r_0}\right)$ .

        \item[(ii)] $v(x)=-2x+4+Q(x)(x-1)(x-2)$ with $Q(0)=-2$. In this case $m=2$, i.e., the above sequence is $r_0,r_1,0,0,\ldots$, and $u(x)=-2x+4r_0+P(x)(x-r_0)(x-r_1)$, where $P(x)=\frac{1}{r_0}Q\left(\frac{x}{r_0}\right)$ with $P(0)=-\frac{2}{r_0}$, i.e., one requires $r_0|2$. One also has $r_1=u(r_0)=2r_0$.     \end{enumerate}
This completes the proof. \end{proof}

\begin{theorem}
    Let $m$ a non-negative integer, $\{r_n\}_{n\ge 0}$ a recurringly nilpotent sequence of the form $(4.5)$, and $u$ be a generating polynomial of $\{r_n\}_{n\ge 0}$. Then one of the following holds:
    \begin{enumerate}

        \item[(1)] $m=0$, and then $r_1$ is arbitrary;  
        
        \item[(2)] $m=1$, and then either $r_0=r_2$, or, $r_0=\varepsilon$;

        \item[(3)] $m=2$, and then either $r_3=r_1=r_0+\varepsilon$, or, $r_0=2\varepsilon,r_1=\varepsilon, \text{ and } r_3=3\varepsilon$; and

        \item[(4)] $m=3$, and then $r_0=\varepsilon,r_1=2\varepsilon,\text{ and }r_4=r_2=3\varepsilon$,
    \end{enumerate}
    for $\varepsilon\in\{\pm 1\}$.
\end{theorem}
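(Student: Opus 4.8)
The plan is to exploit that $0$ is a point of period $2$. Since $u$ generates a sequence of the form (4.5), we have $u(0)=r_{m+1}$ and $u(r_{m+1})=0$; writing $a:=r_{m+1}\in\mathbb{Z}\setminus\{0\}$, this says exactly that $u\in N_{0,2}$. Hence Fact 1.4(b) applies and pins down the rigid form $$u(x)=(x-a)\bigl(xp(x)-1\bigr),\qquad p(x)\in\mathbb{Z}[x].$$ I would first record two structural facts. The integer roots of $u$ are $a$ together with those $z\in\{1,-1\}$ with $zp(z)=1$, so every integer root lies in $\{a,1,-1\}$. Moreover, since $m$ is the first index with $r_m=0$, the pre-period terms $r_0,\dots,r_{m-1}$ are nonzero and pairwise distinct: a repetition $r_i=r_j$ with $i<j<m$ would give $r_{m-(j-i)}=r_m=0$ at the smaller positive index $m-(j-i)$, contradicting minimality of $m$. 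The case $m=0$ is then immediate, since $r_0=0$ forces $r_1=a$ to be an arbitrary nonzero integer, giving (1).

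For $m\ge 1$ the engine is the divisibility $r_i-r_j\mid r_{i+1}-r_{j+1}$, valid for any two orbit points because $x-y\mid u(x)-u(y)$. Taking $r_j=0$ gives $r_i\mid r_{i+1}-a$, and taking $r_j=a$ gives $(r_i-a)\mid r_{i+1}$. From the factored form I would extract two local facts that do the real work: if $u(y)=a$ with $y\notin\{0,a\}$ then $(y-a)\mid 1$, hence $y=a\pm1$; and if $u(y)=\pm1$ then the factors $y-a$ and $yp(y)-1$ are both units, so again $y-a=\pm1$. These let me peel terms off the sequence from the right.

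Armed with this I would run a backward walk starting from $r_{m-1}$, which is a root and hence lies in $\{a,1,-1\}$, splitting into two branches. In the branch $r_{m-1}=\varepsilon\in\{1,-1\}$, the relation $u(r_{m-2})=\varepsilon$ forces $r_{m-2}-a=\pm1$, while $(r_{m-2}-\varepsilon)\mid(\varepsilon-0)$ forces $r_{m-2}=2\varepsilon$; one further step via $(r_{m-3}-2\varepsilon)\mid(2\varepsilon-\varepsilon)$ lands $r_{m-3}\in\{\varepsilon,3\varepsilon\}$, each of which collides with an earlier term once $m\ge 3$, violating distinctness. Thus this branch yields only $m=1$ (case (2), $r_0=\varepsilon$) and $m=2$ (case (3), $r_0=2\varepsilon,\ r_1=\varepsilon,\ a=3\varepsilon$). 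In the branch $r_{m-1}=a$, the first local fact gives $r_{m-2}=a+\delta$ with $\delta=\pm1$, then $r_{m-3}\mid(r_{m-2}-a)=\delta$ forces $r_{m-3}=\pm1$, and a short enumeration over the signs $\delta,\eta=r_{m-3}$, discarding the options that send some $r_i$ to $0$ or make two pre-period terms equal, leaves exactly $m=1$ (case (2), $r_0=a=r_2$), $m=2$ (case (3), $r_3=r_1=r_0+\varepsilon$), and $m=3$ (case (4), $r_0=\varepsilon,\ r_1=2\varepsilon,\ a=r_2=3\varepsilon$).

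The main obstacle I anticipate is proving the bound $m\le 3$, i.e.\ closing off $m\ge 4$. Here I expect to lean on the nearest-neighbor and second-neighbor divisibilities $r_i-r_{i+1}\mid r_{i+1}-r_{i+2}$ and $r_i-r_{i+2}\mid r_{i+1}-r_{i+3}$: once the backward walk has forced several consecutive terms into the small set $\{a,a\pm1,\pm1,2\varepsilon\}$, these relations collapse to impossibilities such as $2\mid 1$ or $3\mid1$ (this is exactly what kills near-miss candidates like $2,1,4,3,0,\dots$), and the distinctness of the at most two available values $a\pm1$ then caps any longer chain. The remaining delicate point is bookkeeping of the sign $\varepsilon$, together with the boundary overlaps occurring when $a=\pm1$, where the two branches meet; one checks these collapse consistently into the listed families. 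Finally, since only the necessity direction is asserted, no realizability computation is needed, though each listed family is easily seen to occur by choosing $p$ appropriately.
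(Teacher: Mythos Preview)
Your approach is correct and takes a genuinely different route from the paper. The paper first reduces to $m\le 5$ by setting $v=u^{(2)}$ and invoking Theorem~4.9 on the thinned sequence, then handles $m=0,1,2,3,4,5$ one by one, each time appealing to Fact~1.4(b) and to the conclusions of the previous cases; the impossibilities for $m=4,5$ are deduced by feeding the truncated sequences back through Theorem~4.9 and the earlier cases. You instead stay with the single polynomial $u$, extract the factored form $u(x)=(x-a)(xp(x)-1)$ once, and run a backward walk driven by the two ``local facts'' (preimages of $a$ lie at $a\pm1$; preimages of $\pm1$ lie at $a\pm1$) together with the standard divisibility chain. This is more self-contained, since it never calls on Theorem~4.9, and it makes the bound $m\le 3$ emerge directly rather than via the detour $m\le 5$.

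One point to tighten: in the branch $r_{m-1}=\varepsilon$, your assertion that $r_{m-3}=3\varepsilon$ ``collides with an earlier term'' is not literally a pre-period collision. What actually happens is that your constraint $r_{m-2}-a=\pm1$ forces $a\in\{\varepsilon,3\varepsilon\}$; when $a=3\varepsilon$ you get $r_{m-3}=a$ and hence $u(r_{m-3})=0\neq 2\varepsilon$, while the case $a=\varepsilon$ is precisely the overlap with the $a$-branch that you flag at the end (and there the $a$-branch constraint $r_{m-3}\mid\delta$ forces $r_{m-3}=\pm1$, killing $3\varepsilon$). So the conclusion stands, but you should phrase it as ``collides with $a$ or falls into the overlap'' rather than as a distinctness violation among $r_0,\dots,r_{m-1}$. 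With that adjustment your sketch goes through, and the $m\ge4$ closure in the $a$-branch indeed reduces (after writing $r_{m-4}=a-\delta$ and using $(r_{m-3}-r_{m-2})\mid\delta$ together with $(r_{m-4}-r_{m-3})\mid(r_{m-3}-r_{m-2})$) to a divisibility of the shape $3\mid 2$, exactly as you anticipate.
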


\begin{proof} Let $v(x)=u^{(2)}(x)$. One readily notices that if $m$ is even (respectively, $m$ is odd), the sequence of iterations of $v$ starting at $r_0$ (respectively, starting at $r_1$) will be of the form (4.4). Then by Theorem 4.9 it follows that it is enough to look at the cases when $m\le 2\cdot 2+1=5$. Throughout this proof $\varepsilon=\pm 1$.\\

\noindent{\tt Case 1.} Let $m=0$, i.e., $r_0=0$ and $r_1\neq 0$. Then $u(x)=r_1-x$ satisfies (4.5).\\

\noindent{\tt Case 2.} Let $m=1$. Then the sequence looks like $r_0,0,r_2,0,\ldots$. From Fact 1.4(b) it follows that there is a polynomial $p$ such that $u(x)=(x-r_2)(xp(x)-1)$. That means, in particular, $$0=(r_0-r_2)(r_0p(r_0)-1),~ i.e.,~either~ r_0=r_2,~ or ~r_0=\varepsilon.$$
\begin{itemize}
    \item If $r_0=r_2$, then $u(x)=r_0-x$ satisfies (4.5).

    \item If $r_0=\varepsilon$, then $u(x)=\varepsilon(x-r_2)(x-\varepsilon)$ satisfies (4.5).
\end{itemize}

\noindent{\tt Case 3.} Let $m=2$. Then the sequence looks like $r_0,r_1,0,r_3,0,\ldots$. Then by Fact 1.4(b) it follows that $u(x)=(x-r_3)(xp(x)-1)$ for some polynomial $p$. That means, in particular, $$0=u(r_1)=(r_1-r_3)(r_1p(r_1)-1),~ i.e., ~either~ r_1=r_3,~ or~ r_1=\varepsilon.$$ One also has $r_1=u(r_0)=(r_0-r_3)(r_0p(r_0)-1)$.
\begin{enumerate}
    \item[(i)] If $r_1=r_3$, then 
    \begin{align*}
      r_1&=(r_0-r_3)(r_0p(r_0)-1)=(r_0-r_1)(r_0p(r_0)-1)\\
      &\implies r_1=r_0^2p(r_0)-r_0-r_0r_1p(r_0)+r_1\\
      &\implies r_0p(r_0)-1-r_1p(r_0)=0\\
      &\implies (r_0-r_1)p(r_0)=1\\
       &\implies r_1=r_0+\varepsilon 
    \end{align*} 
    It is easy to check that $u(x)=\varepsilon(r_1-x)(x+\varepsilon)$ satisfies (4.5). 

    \item[(ii)] If $r_1=\varepsilon$ and $r_1\neq r_3$, then a polynomial $u$ satisfying (4.5) must be in $N_{\varepsilon,1}$. Suppose $\varepsilon=1$. Clearly, $u(x)=(x-1)p(x)$ for some polynomial $p$. Since $u(r_0)=1$, one has $r_0=2$ and $p(2)=1$. Also, $r_3\neq 1$ and $u(r_3)=0$ implies that $p(r_3)=0$. Note that one also has $p(0)=-u(0)=-r_3$. Thus, $p(x)$ must be of the form $(x-r_3)(xq(x)+1)$ for some polynomial $q$. However, using the fact that $p(2)=1$, one immediately sees that $1=(2-r_3)(2q(2)+1)$, which is only possible if $r_3=3$. Then $u(x)=(x-1)^2(3-x)$ satisfies (4.5).
    Similarly, when $r_1=-1$, one can show that $r_0=-2,r_3=-3$, and that $u(x)=-(x+1)^2(x+3)$ satisfies (4.5). 
\end{enumerate}

\noindent{\tt Case 4.} Let $m=3$. Then the sequence looks like $r_0,r_1,r_2,0,r_4,0,\ldots.$ Then by Fact 1.4(b) it follows that $u(x)=(x-r_4)(xp(x)-1)$ for some polynomial $p$, and so $$0=u(r_2)=(r_2-r_4)(r_2p(r_2)-1)$$ implies, in particular, that $r_2=r_4$ or $r_2=\varepsilon$. Let us consider the truncated sequence $r_1,r_2,0,r_4,0,\ldots$. 

If $r_2=r_4$, then from Case 3(i) above it follows that $r_2=r_1+\varepsilon$. This means (4.5) must look like $r_0,r_1,r_1+\varepsilon,0,r_1+\varepsilon,0\ldots$. Suppose that $\varepsilon=1$. One immediately sees that $r_1-r_0|u(r_1)-u(r_2)=1$, i.e., $r_0=r_1\pm 1$. Since $r_2=r_1+1=(r_0\pm 1)+1$, and $r_2\neq r_0$, one has $r_1=r_0+1$ and $r_2=r_0+2$. Then $u(x)=(x-r_0-2)Q(x)$ for some polynomial $Q$ with $Q(0)=-1$. However, then $$r_0+1=r_1=u(r_0)=-2Q(r_0)\equiv_{r_0}-2Q(0)=2,$$ i.e., $r_0=1$ (as $r_0=-1$ means $r_1=0$). One sees that $u(x)=-x^3+4x^2-4x+3$ satisfies (4.5). Similarly, when $\varepsilon=-1$, one can show that $r_0=-1,r_1=-2,r_2=-3$, and taking $u(x)=-x^3-4x^2-4x-3$ suffices.

If $r_2=\varepsilon$, then one can see from Case 3(ii) above that $r_1=2\varepsilon,r_2=\varepsilon,r_4=3\varepsilon$, and $u$ satisfying the truncated sequence must be of the form $(x-\varepsilon)(x-3\varepsilon)(xq(x)+\varepsilon)$ for some polynomial $q$ with the property that $q(2\varepsilon)=-1$. Now, when we go back to the original sequence (4.5), it is clear that $r_0\not\in \{0,\varepsilon,2\varepsilon,3\varepsilon\}$. However, then $$2\varepsilon=u(r_0)=(r_0-\varepsilon)(r_0-3\varepsilon)(r_0q(r_0)+\varepsilon),$$ so that $(r_0-\varepsilon)(r_0-3\varepsilon)|2$, which is impossible unless $r_0=2\varepsilon$ and that possibility has been excluded above. \\

\noindent{\tt Case 5.} Let $m=4$. Then the sequence looks like $r_0,r_1,r_2,r_3,0,r_5,0,\ldots$. As the sequence of iterations of $v$ starting at $r_0$ is of the form (4.4), and $r_2\neq 0$, it follows from Theorem 4.9 that $r_0|2$ and $r_2=2r_1$. If $r_0=\varepsilon$, then $r_2=2\varepsilon$, and so it follows from either Fact 1.2 that no $u$ can satisfy (4.5). Therefore $r_0=2\varepsilon$. If $r_0=2$, then $r_2=4$. But the argument for $m=3$ applied to the polynomial sequence $r_1,r_2,r_3,0,r_5,0,\cdots$, implies that $r_2=\pm 1$, a contradiction. This proves that $m=4$ is not possible.

Now using the polynomial sequence $r_1,r_2,r_3,r_4,0,r_6,0,\ldots$ and the fact that $m=4$ is not possible (from Case 4), one readily sees that $m=5$ is not possible.\end{proof}

It should be noted that for Theorems 4.9 and 4.10, if we allow the generating polynomial $u$ to be a polynomial over $\mathbb{Q}$, then we can always use Lagrange's interpolation to obtain such a polynomial no matter how large the $m$ is, and so the restriction on $m$ that we get in the proof heavily depends on the fact that $u\in\mathbb{Z}[x]$. Therefore this is really a question about iterations of integer polynomials.

We end this paper by deriving an interesting result about bounded polynomial integer sequences which is a consequence of Theorems 4.9 and 4.10.

\begin{corollary}
Every bounded polynomial integer sequence must be of one of the following forms:
    \begin{enumerate}
        \item[(1)] $S,S,S,S,S,\ldots$

        \item[(2)] $R,S,S,S,S,\ldots$

        \item[(3)] $S+\varepsilon,S+2\varepsilon,S,S,S,\ldots$

         \item[(4)] $S+2\varepsilon,S+4\varepsilon,S,S,S,\ldots$

        \item[(5)] $S,R,S,R,S,\ldots$

        \item[(6)] $S+\varepsilon,S,R,S,R,\ldots$, where $R-S\neq \varepsilon$

        \item[(7)] $S+2\varepsilon,S+\varepsilon,S,S+3\varepsilon, S,\ldots$
    \end{enumerate}
for $\varepsilon\in\{\pm 1\}$, and $S,R$ integers.
    
\end{corollary}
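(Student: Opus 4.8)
The plan is to reduce the classification of bounded polynomial integer sequences entirely to the structure theorems already established, namely Theorems 4.7, 4.9, and 4.10. The key observation to exploit first is that a bounded polynomial integer sequence $\{r_n\}_{n\ge 0}$ has a finite orbit: since $u$ maps integers to integers and the sequence is bounded, only finitely many integer values can appear, so the orbit of $u$ at $r_0$ is finite. Hence $r_0$ is a pre-periodic point of $u$, the tail of the sequence is eventually periodic, and the eventual cycle $C$ together with the pre-period is completely determined by $u$. So the first step I would carry out is to argue that boundedness forces eventual periodicity, and therefore the whole problem becomes: classify the possible shapes of the eventual cycle and the possible ways the pre-periodic part feeds into it.

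The second and central step is to analyze the eventual cycle itself. A cycle is a finite set $\{c_0,c_1,\ldots,c_{\ell-1}\}$ with $u(c_i)=c_{i+1}$ (indices mod $\ell$). Here I would invoke the standard divisibility fact used throughout the paper: for distinct integers $a,b$, $a-b\mid u(a)-u(b)$. Applied around a cycle, the consecutive differences $c_{i+1}-c_i$ form a chain under divisibility that must close up, which forces all the differences to have the same absolute value; a short argument then shows the cycle length $\ell$ is at most $2$. Thus every bounded polynomial integer sequence eventually lands in either a fixed point $c$ (giving the tail $\ldots,S,S,S,\ldots$ with $S=c$) or a $2$-cycle $\{S,R\}$ with $R\neq S$ (giving the tail $\ldots,S,R,S,R,\ldots$). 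This is exactly the dichotomy packaged in Theorem 4.7, and I would either cite it directly or reprove this cycle-length bound as a self-contained lemma.

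The third step is to translate each eventual cycle into the nilpotent-sequence framework so that Theorems 4.9 and 4.10 apply verbatim. After a shift of coordinates $x\mapsto x-S$ (replacing $u$ by $\tilde u(x)=u(x+S)-S$, which is again an integer polynomial and preserves the sequence structure), a fixed point $S$ becomes the fixed point $0$ and a $2$-cycle $\{S,R\}$ becomes a $2$-cycle containing $0$. In the fixed-point case the shifted sequence is a recurringly nilpotent sequence of the form $(4.4)$, so Theorem 4.9 lists exactly the admissible pre-periods: the zero case, the arbitrary-$r_0$ case, and the $r_0\mid 2$, $r_1=2r_0$ case. Undoing the shift produces forms $(1)$, $(2)$, $(3)$, and $(4)$ of the corollary (the split between $(3)$ and $(4)$ coming from whether $r_0=\varepsilon$ or $r_0=2\varepsilon$ in the index-$2$ situation of Theorem 4.9(3), written out relative to the limit $S$). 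In the $2$-cycle case the shifted sequence has the form $(4.5)$, and Theorem 4.10 enumerates the possibilities for $m\le 3$, which after unshifting yield forms $(5)$, $(6)$, and $(7)$; the side condition $R-S\neq\varepsilon$ in $(6)$ records exactly the exclusion $r_1\neq r_3$ that distinguishes Case 3(ii) from Case 3(i) in the proof of Theorem 4.10.

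The main obstacle I anticipate is bookkeeping rather than any genuinely new idea: I must verify that the shift $x\mapsto x-S$ legitimately converts a bounded sequence with limit/cycle involving $S$ into precisely the normalized forms $(4.4)$ or $(4.5)$ with the $0$-entries in the right positions, and then carefully match each case of Theorems 4.9 and 4.10 to the correct one of the seven listed forms, keeping track of the parameter $\varepsilon$ and the auxiliary integers $R$ and $S$. In particular I would double-check that every case produced by the two theorems lands in the list and, conversely, that no spurious form is introduced, so that the seven families are exactly the image of the two classifications under the inverse shift. Once the reduction to Theorems 4.9 and 4.10 is set up correctly, the remainder is a finite and routine case-matching.
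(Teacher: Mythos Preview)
Your proposal is correct and follows essentially the same route as the paper: shift by the recurring value $S$ via $v(x)=u(x+S)-S$ to turn the bounded sequence into a recurringly nilpotent one, invoke Theorem~4.7 to get the dichotomy (4.4)/(4.5), and then read off the seven forms from Theorems~4.9 and~4.10 after undoing the shift. The only slip is a bookkeeping one you already anticipated: form~(6) arises from the $m=1$ case of Theorem~4.10 (Case~2 there, where $r_0=\varepsilon$), not from Case~3(ii); the side condition $R-S\neq\varepsilon$ simply prevents overlap with form~(5).
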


\begin{proof}
    Let $\{r_n\}_{n\ge 0}$ be a bounded polynomial integer sequence. Then one obtains in particular, that there is an integer $S$ such that $r_n=S$ for infinitely many $n$'s, with $m:=\text{min}\{n\in\mathbb{N}\cup\{0\}~|~r_n=S\}$. Define $$s_n:=r_n-S~ (n\ge 0)\textup{, and }v(x):=u(x+S)-S.$$ Then $\{s_n\}_{n\ge 0}$ is a recurringly nilpotent polynomial sequence with $v$ satisfying $\{s_n\}_{n\ge 0}$, as for each $n\ge 0$ one obtains by induction that $v^{(n)}(x-S)=u^{(n)}(x)-S$.  Thus it follows from the proof Theorem 4.7 that $\{s_n\}_{n\ge 0}$ it must be of one of the forms (4.4) and (4.5). Thus from Theorems 4.9 and 4.10 one obtains the following list that contains all possible forms of bounded polynomial integer sequences.
    \begin{itemize}
        \item $m=0$, and the sequences are of the form
        \begin{align}
              & S,S,S,S,\ldots\\
              & S,r_1,S,r_1,\ldots;
        \end{align}
        
       \item $m=1$, and the sequences are of the form
        \begin{align}
              & r_0,S,S,S,\ldots\\
              & r_0,S,r_0,S,\ldots\\
              &S+\varepsilon,S,r_2,S,\ldots;
        \end{align}

        \item $m=2$, and the sequences are of the form
        \begin{align}
              & S+\varepsilon,S+2\varepsilon,S,S,\ldots\\
              & S+2\varepsilon,S+4\varepsilon,S,S,\ldots\\
              & r_0,r_0+\varepsilon,S,r_0+\varepsilon,\ldots\\
              & S+2\varepsilon,S+\varepsilon,S,S+3\varepsilon,S,\ldots;
        \end{align} 

        \item $m=3$, and the sequences are of the form 
        \begin{align}
            &S+\varepsilon,S+2\varepsilon,S+3\varepsilon,S,S+3\varepsilon;
        \end{align}

   \end{itemize}
    where $\varepsilon\in\{\pm 1\}$. One readily notices that (4.7) and (4.9) are of the form Corollary 4.11(2), and that (4.10) and (4.13) are of the form Corollary 4.11(4). Also, one can see that a sequence of the form (4.14) can be transformed into a sequence of the form (4.15) by simply replacing $S$ by $S-3\varepsilon$, and then replacing $\varepsilon$ by $-\varepsilon$. \end{proof}

We end this paper with a discussion on a few open problems. Given positive integers $r\text{ and }m$, $u\in N_{r,m}$, and $k\in \mathbb{N}$ satisfying the condition (2.3), we have, from Lemma 2.1, that $k\le C_r$. One notices that the bound on $k$ cannot be improved in general: for every given $k\ge 3$, and $r=k!-k-1$, consider $$u(x)=(x+1)-(x-r)\cdots(x-r-k+1).$$ This is a polynomial of degree $k$ satisfying (2.3), and $$u(r+k)=(r+k+1)-k!=0,~ i.e.,~ u^{(k+1)}(r)=0$$ In other words, $u(x)\in N_{r,k+1}^k$. Now suppose that $r$ is not in $\{s!-s-1~|~s\ge 3\}$. We ask the following two questions:
\begin{enumerate}
    \item[Q1.] Can the bound for $k$ be improved?

    \item[Q2.] Is it possible to get a universal bound for $k$ that does not depend on $r$?
\end{enumerate}
Also, as can be seen in second example in Section 1.2, whenever $m|r$ with $m\in\mathbb N$, $N_{r,m}$ is not empty. However, this is not necessary: the polynomial $u(x)=-x^3+12x^2-43x+46$ is nilpotent at $5$ of index 4. So we have a natural question.
\begin{enumerate}
    \item[Q3.] Given an $r\ge 5$, what are the positive integers $m$ for which the set $N_{r,m}$ is non-empty.
    
    \end{enumerate}

\noindent {\bf Acknowledgements.} The author gratefully acknowledges his advisor, Professor Alexander Borisov, for his invaluable suggestions and guidance on this paper, without which this work would not have seen the light of day. The author would like to thank
and acknowledge Professor Marcin Mazur for referring \cite{MN06}, \cite{N89} and \cite{N02}. Finally, the author would like to thank Professor Dikran Karagueuzian in catching a few
mistakes in the statements of Theorems 3.5, 4.3 and Corollary 4.4 in the
preliminary version of this manuscript.

\end{document}